\newcommand{\Lr}{\mathscr{L}}
\newcommand{\dvol}{\frac{\omega^m}{m!}}
\newcommand{\dsubvol}{\frac{\omega^{m-1}}{(m-1)!}}
\def\End{\mathop{\mathrm{End}}\nolimits}
\newcommand{\tr}{\mathrm{tr}}
\newcommand{\hatc}{\hat{c}}
\newcommand{\hatb}{\hat{b}}
\newcommand{\hata}{\hat{a}}
\newcommand{\g}{\mathfrak{g}}
\newcommand{\R}{\mathbb{R}}
\def\ham{\mathop{\mathrm{Ham}}\nolimits}
\def\diff{\mathop{\mathrm{Diff}}\nolimits}
\def\VM{\mathop{\mathfrak{X}(M)}\nolimits}
\newcommand{\deltaFed}{\delta_{\mathrm{F}}}
\newcommand{\ric}{\mathrm{Ric}}
\newcommand{\nablaLC}{\nabla^{g_J}}
\newcommand{\nablabarJ}{\overline{\nabla}^J}
\newcommand{\V}{\mathcal{V}}
\newcommand{\J}{\mathcal{J}}
\newcommand{\D}{\mathcal{D}}
\newcommand{\RR}{\mathcal{R}}
\newcommand{\Rr}{\mathrm{R}}
\newcommand{\WW}{\mathbb{W}}
\newcommand{\ddt}{\frac{d}{dt}}
\newcommand{\dds}{\frac{d}{ds}}
\newcommand{\ddto}{\left.\ddt\right|_{0}}
\newcommand{\ddso}{\left.\dds\right|_{0}}
\newcommand{\Om}{\Omega^{\J}}
\newcommand{\Omtilde}{\widetilde{\Omega}^{\J}}
\newcommand{\invnu}{\frac{1}{\nu}}
\newcommand{\W}{\mathcal{W}}
\newtheorem{theoremprinc}{Theorem}
\newtheorem{theorem}{Theorem}[section]
\newtheorem{lemma}[theorem]{Lemma}
\newtheorem{cor}[theorem]{Corollary}
\newtheorem{prop}[theorem]{Proposition}
\newtheorem{defi}[theorem]{Definition}
\theoremstyle{definition}
\theoremstyle{remark}
\newtheorem{rem}[theorem]{Remark}
\begin{document}

\renewcommand{\refname}{Bibliography}

\title{The scalar curvature in formal deformation quantization. I}

\author{Laurent La Fuente-Gravy\\
	\scriptsize{laulafuent@gmail.com}\\
	\footnotesize{Universit\'e libre de Bruxelles and Haute-\'Ecole Bruxelles-Brabant -- ESI}\\[-7pt]
	\footnotesize{Belgium} \\[-7pt]}

\maketitle

\begin{abstract}
In the framework of formal deformation quantization, we apply our formal moment map construction on the space of almost complex structures to recover the Donaldson-Fujiki moment map picture of the Hermitian scalar curvature. In the integrable case, it yields a formal moment map deforming the scalar curvature moment map.
\end{abstract}

\noindent {\footnotesize {\bf Keywords:} Almost-complex structures, K\"ahler geometry, Moment map, Deformation quantization, Hamiltonian diffeomorphisms, diffeomorphisms group, Hermitian scalar curvature.\\
{\bf Mathematics Subject Classification (2010):}  53D55, 53D20, 32Q15}

\tableofcontents

%%%%%%%%%%%%%%%%%%%%%%%%%%%%%%%%%%%%%%%%%%%%%%%%%%%%%%%%%%%%%%%%%%%%%%%%%%%%%%%%%%%%%%%%%%%%%%%%%%%%%%%%
%%%%%%%%%%%%%%%%%%%%%%%%%%%%%%%%%%%%%%%%%%%%%%%%%%%%%%%%%%%%%%%%%%%%%%%%%%%%%%%%%%%%%%%%%%%%%%%%%%%%%%%%

\section{Introduction}

The Donaldson-Fujiki moment map picture \cite{Don2,Fuj} states the Hermitian scalar curvature is a moment map on the space $\J(M,\omega)$ of positive almost-complex structures on a symplectic manifold $(M,\omega)$. This famous picture motivates the use of GIT stability to treat the constant scalar curvature K\"ahler metric problem.

In our approach of formal moment maps \cite{LLFlast,LLFformalDonaldson}, we propose a general picture using formal deformation quantization \cite{BFFLS} to recover and deform moment map pictures on infinite dimensional spaces. This paper proposes to apply this procedure to the space $\J(M,\omega)$.

A natural Fedosov star product algebra bundle is defined above $\J(M,\omega)$. Using a canonical formal connection \cite{AMS} on that bundle, we show the star product trace of its curvature is a deformation of the symplectic form involved in the Donaldson-Fujiki picture. 

Considering the action of Hamiltonian diffeomorphisms on $\J(M,\omega)$, we show this action preserves the deformed symplectic form. In the almost-K\"ahler situation, we show the star product trace satisfies the formal moment map equation at order $1$ in $\nu$, and we show it co\"incides with the Donaldson-Fujiki picture. In the K\"ahler case, we show the star product trace of a deformed Hamiltonian gives a formal moment map on $\J(M,\omega)$ which deforms the scalar curvature.

An alternative approach was proposed by Foth-Uribe \cite{FU} using the operators from geometric quantization.

%%%%%%%%%%%%%%%%%%%%%%%%%%%%%%%%%%%%%%%%%%%%%%%%%%%%%%%%%%%%%%%%%%%%%%%%%%%%%%%%%%%%%%%%%%%%%%%%%%%%%%%%

\section{Three connections to play with} \label{subsect:connections}

Throughout this paper, we consider a closed symplectic manifold $(M,\omega)$ of dimension $2m$. We also deal with infinite dimensional manifolds and Lie groups, we will follow the theory from \cite{KHN}.

We consider the space of almost-complex structures on $(M,\omega)$:
\begin{equation*}
\J(M,\omega):=\{ J\in \Gamma\End(TM)\, |\, J^2=-Id,\, \omega(J\cdot,J\cdot)=\omega(\cdot,\cdot),\, \omega(\cdot,J\cdot)>0 \}
\end{equation*}
It is a Fr\'echet manifold. At any point $J\in \J(M,\omega)$, its tangent space is 
\begin{equation*}
T_J \J(M,\omega):=\{A\in  \Gamma\End(TM)\, | \, \omega(\cdot,A\cdot) \textrm{ is symmetric and } AJ=-JA\}
\end{equation*}
The symplectic form on $\J(M,\omega)$ we will be interested in writes as :
\begin{equation} \label{eq:OmJ}
\Om_J(A,B):=\int_M\textrm{Tr}(JAB)\dvol, \textrm{ for any } A,B\in T_J\J(M,\omega).
\end{equation}
Also, $\J(M,\omega)$ admits a complex structure compatible with $\Om$
$$\mathbb{J} A := JA \textrm{ for } J\in T_J\J(M,\omega).$$
When, there is an integrable $J_0 \in \J(M,\omega)$ turning $(M,\omega,J_0)$ into a K\"ahler manifold, the subspace of integrable complex structures $\J_{int}(M,\Omega)\subseteq \J(M,\omega)$ is a complex subspace so that $\Om$ restricts to a symplectic structure
$$\Omega^{\J_{int}}:= \left.\Om\right|_{\J_{int}}.$$

To any $J\in \J(M,\omega)$, one attaches a Riemannian metric
$$g_J(\cdot,\cdot):=\omega(\cdot,J\cdot).$$
Then, one can consider three connections :
\begin{itemize}
\item the Levi-Civita connection $\nabla^{g_J}$, 
\item a symplectic connection $\nabla^J$ build out of $\nabla^{g_J}$ as in \cite{LLFformalDonaldson},
\item the Chern connection, we will denote by $\overline{\nabla}^J$.
\end{itemize}

\subsection{The Levi-Civita connection $\nabla^{g_J}$}

The Levi-Civita connection $\nabla^{g_J}$ is the unique torsion-free connection leaving $g_J$ parallel.

For $\varphi\in \ham(M,\omega)$ a Hamiltonian diffeomorphism, one has a natural action of it on $J\in \J(M,\Omega)$ by
$$\varphi \cdot J := \varphi_*\circ J \circ \varphi_*^{-1}.$$
One also has a natural action on a linear connection $\nabla$ on $TM$ by
$$(\varphi\cdot \nabla)_X Y := \varphi_*\nabla_{\varphi_*^{-1}X}\varphi_*^{-1}Y \textrm{ for all } X,Y \in \VM.$$
The next proposition follows from straightforward computations.

\begin{prop} \label{prop:equivnablagJ}
For $J\in  \J(M,\Omega)$ and $\varphi\in \ham(M,\omega)$,
$$\nabla^{g_{\varphi.J}}=\varphi.\nabla^{g_J}.$$
\end{prop}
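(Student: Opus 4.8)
The plan is to prove the naturality relation $\nabla^{g_{\varphi.J}}=\varphi.\nabla^{g_J}$ by exploiting the uniqueness of the Levi-Civita connection. The key observation is that the right-hand side $\varphi.\nabla^{g_J}$ is itself a linear connection on $TM$, so by the Fundamental Theorem of Riemannian geometry it suffices to verify that this connection (i) is torsion-free and (ii) leaves the metric $g_{\varphi.J}$ parallel. Once both properties are checked, uniqueness forces $\varphi.\nabla^{g_J}=\nabla^{g_{\varphi.J}}$ immediately, with no need to manipulate Christoffel symbols.

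First I would record the relationship between the two metrics. Since $g_J(\cdot,\cdot)=\omega(\cdot,J\cdot)$ and $\varphi\in\ham(M,\omega)$ preserves $\omega$, i.e. $\varphi^*\omega=\omega$, a direct computation gives $g_{\varphi.J}(X,Y)=\omega(X,(\varphi.J)Y)=\omega(X,\varphi_*J\varphi_*^{-1}Y)$. Using $\varphi^*\omega=\omega$ to pull $\varphi_*^{-1}$ across, this becomes $\omega(\varphi_*^{-1}X,J\varphi_*^{-1}Y)=g_J(\varphi_*^{-1}X,\varphi_*^{-1}Y)$, i.e. $g_{\varphi.J}=\varphi^*{}^{-1}g_J=(\varphi^{-1})^*g_J$, the natural pushforward of the metric. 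This is the essential compatibility that makes the pushforwarded connection the correct candidate.

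Next I would verify the two defining properties for $\varphi.\nabla^{g_J}$. For the metric condition, I would compute $(\varphi.\nabla^{g_J})_X\, g_{\varphi.J}$ by unwinding the definition $(\varphi.\nabla)_XY=\varphi_*\nabla_{\varphi_*^{-1}X}\varphi_*^{-1}Y$ together with the identity $g_{\varphi.J}=(\varphi^{-1})^*g_J$ from the previous step; all the $\varphi_*$ factors cancel against the pulled-back metric, reducing the parallelism of $g_{\varphi.J}$ under $\varphi.\nabla^{g_J}$ to the parallelism $\nabla^{g_J}g_J=0$, which holds by definition of the Levi-Civita connection. For the torsion, I would check that the torsion tensor of $\varphi.\nabla^{g_J}$ is the pushforward of the torsion of $\nabla^{g_J}$: writing $T^{\varphi.\nabla}(X,Y)=(\varphi.\nabla)_XY-(\varphi.\nabla)_YX-[X,Y]$ and using that $\varphi_*$ is a diffeomorphism so that $\varphi_*[\varphi_*^{-1}X,\varphi_*^{-1}Y]=[X,Y]$, one obtains $T^{\varphi.\nabla}(X,Y)=\varphi_*\,T^{\nabla^{g_J}}(\varphi_*^{-1}X,\varphi_*^{-1}Y)$, which vanishes since $\nabla^{g_J}$ is torsion-free.

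The computations are genuinely straightforward, as the proposition's preamble indicates; the only point requiring mild care is the bookkeeping of where $\varphi_*$ versus $\varphi_*^{-1}$ and $\varphi^*$ versus $(\varphi^{-1})^*$ appear, particularly in establishing $g_{\varphi.J}=(\varphi^{-1})^*g_J$ and in the cancellation within the metric-compatibility check. I do not anticipate a serious obstacle: the conceptual content is entirely captured by the two facts that pushforward by a diffeomorphism intertwines Lie brackets and that $\varphi$ preserves $\omega$, so the whole statement reduces to the uniqueness clause of the Levi-Civita construction applied to the pushforwarded data.
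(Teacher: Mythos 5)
Your proof is correct and is exactly the ``straightforward computation'' the paper alludes to without writing out: the identity $g_{\varphi.J}=(\varphi^{-1})^*g_J$ (from $\varphi^*\omega=\omega$) plus the naturality of the torsion-free and metric-compatibility conditions under pushforward, followed by the uniqueness clause of the Levi-Civita construction. Nothing is missing.
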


\noindent Later, we will need a formula for the first order variation of $\nabla^{g_J}$.

\begin{lemma} \label{lemme:firstvarnabla}
Let $t\mapsto J_t\in \J(M,\omega)$ with $\ddto J_t=A$, then 
$$g_J(\ddto \nabla^{g_{J_t}}_X Y, Z) = \frac{1}{2}\left( (\nabla^{g_J}_Y a)(X,Z)+ (\nabla^{g_J}_X a)(Y,Z)- (\nabla^{g_J}_Z a)(X,Y)\right),$$
for $X,Y,Z \in \VM$ and $a(X,Y):=\ddto g_{J_t}(X,Y)=\omega(X,AY)$.
\end{lemma}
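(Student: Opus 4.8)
The plan is to treat the first-order variation of the connection as a tensor and to pin it down from the two defining properties of the Levi-Civita connection (vanishing torsion and metric compatibility), exactly as in the classical computation of the variation of $\nabla^{g}$ under a deformation of the metric. First I would set $C_X Y := \ddto \nabla^{g_{J_t}}_X Y$. Since $X,Y,Z$ and the bracket $[X,Y]$ do not depend on $t$, differentiating the torsion-free identity $\nabla^{g_{J_t}}_X Y - \nabla^{g_{J_t}}_Y X = [X,Y]$ at $t=0$ shows $C_X Y = C_Y X$, so that $C$ is symmetric in $X,Y$; this symmetry is what makes the linear system below solvable.

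Next I would differentiate the metric-compatibility relation $X\big(g_{J_t}(Y,Z)\big) = g_{J_t}(\nabla^{g_{J_t}}_X Y, Z) + g_{J_t}(Y, \nabla^{g_{J_t}}_X Z)$ at $t=0$. Writing $a = \ddto g_{J_t}$ and using the definition $(\nabla^{g_J}_X a)(Y,Z) = X(a(Y,Z)) - a(\nabla^{g_J}_X Y, Z) - a(Y, \nabla^{g_J}_X Z)$, the differentiated identity collapses to
$$(\nabla^{g_J}_X a)(Y,Z) = g_J(C_X Y, Z) + g_J(C_X Z, Y),$$
and cycling $X,Y,Z$ produces two further relations of the same shape.

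Then I would solve this system by the usual Koszul-type alternating sum: I form $(\nabla^{g_J}_Y a)(X,Z) + (\nabla^{g_J}_X a)(Y,Z) - (\nabla^{g_J}_Z a)(X,Y)$ and substitute the three relations above. Setting $T(X,Y,Z) := g_J(C_X Y, Z)$, which is symmetric in its first two slots by the symmetry of $C$, every term except $2\,T(X,Y,Z)$ cancels in pairs, leaving $2\,g_J(C_X Y, Z)$ on the right; dividing by $2$ yields the stated formula.

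There is no genuinely hard step here: the computation is a standard polarization. The only point requiring care is establishing the symmetry $C_X Y = C_Y X$ at the outset, since it is precisely this that lets the three cyclic metric-compatibility relations be inverted into a closed-form expression for $g_J(C_X Y, Z)$. An alternative, equally short route is to differentiate the Koszul formula $2 g(\nabla_X Y, Z) = \cdots$ directly; its left-hand side then produces an extra term $2\,a(\nabla^{g_J}_X Y, Z)$, and one checks that the differentiated right-hand side equals the three $\nabla^{g_J} a$ terms plus exactly this same quantity, so that the extra contributions cancel and the result reappears.
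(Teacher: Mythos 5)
Your proof is correct: the symmetry $C_XY=C_YX$ from differentiating the torsion-free condition, the relation $(\nabla^{g_J}_X a)(Y,Z)=g_J(C_XY,Z)+g_J(C_XZ,Y)$ from differentiating metric compatibility, and the Koszul-type alternating sum do combine to give exactly the stated formula. The paper offers no proof of its own, only a citation to Topping's book, where the same standard variation-of-the-Levi-Civita-connection computation is carried out (there in normal coordinates rather than invariantly), so your argument is essentially the intended one.
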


\begin{proof}
A short proof can be found in P. Topping's book \cite{PTbook}.
\end{proof}

The \emph{curvature} of $\nabla^{g_J}$ is the tensor:
$$R^{g_J}(X,Y)Z:= \nabla_X \nabla_Y Z - \nabla_Y \nabla_X Z - \nabla_{[X,Y]} Z$$
for $X,Y,Z \in \VM$. The \emph{Ricci curvature} is the symmetric $2$-tensor 
$$\ric(U,V):= \sum_{k=1}^{2m} g_J(R^{g_J}(e_k,U)V,e_k),$$
for $U,V \in T_xM$ and $\{e_k\,|\, k=1,\ldots, 2m\}$ is an orthonormal frame at point $x\in M$.\\
In the K\"ahler case, when $J$ is integrable : $\ric(JU,JV)=\ric(U,V)$ and one defines the Ricci form by
\begin{equation}\label{eq:ricciform}
\mathrm{ric}(U,V):= \ric(JU,V).
\end{equation}
Also, one has 
\begin{equation} \label{eq:RicciKahler}
\mathrm{ric}(U,V)=-\frac{1}{2}\sum_{k=1}^{2m} g_J(R^{g_J}(e_k,Je_k)U,V).
\end{equation}

\subsection{The symplectic connection $\nabla^J$} \label{sect:nablaJ}

In the sequel, we will need to attach a symplectic connection to any almost complex structure $J\in \J(M,\omega)$. It is similar to what we used in \cite{LLFformalDonaldson}.

First, recall that a symplectic connection on $(M,\omega)$ is a torsion-free linear connection leaving $\omega$ parallel. A symplectic connection can be build out of any torsion-free linear connection, so we build one out of $\nabla^{g_J}$, for any $J\in \J(M,\omega)$.

We define a $2$-tensor $K^J(X,Y)$ on $M$ by
\begin{equation*} \label{eq:tensorSJ}
\omega(K^J(X,Y),Z):= (\nabla^{g_J}_X\omega)(Y,Z) \textrm{ for all } X,Y,Z \in TM. 
\end{equation*}
Then, a symplectic connection $\nabla^J$ is obtained through the formula
\begin{equation*}
\nabla^J_X Y:= \nabla^{g_J}_X Y + \frac{1}{3}K^J(X,Y) + \frac{1}{3}K^J(Y,X),
\end{equation*}
for $X,Y \in \VM$.

\begin{prop}\label{prop:nablaJ}
For all $X,Y\in TM$, one has 
$$K^J(X,Y)=-J\left(\nabla^{g_J}_X J\right)(Y).$$
So that,
$$\nabla^J_X Y = \nabla^{g_J}_X Y - \frac{1}{3}J\left(\nabla^{g_J}_X J\right)(Y) - \frac{1}{3}J\left(\nabla^{g_J}_YJ\right)(X).$$

\noindent Moreover, $\nabla^{\varphi\cdot J}=\varphi\cdot \nabla^J$ for any $\varphi\in \ham(M,\omega)$.
\end{prop}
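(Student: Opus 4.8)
The plan is to prove the three assertions in turn, the heart being the first identity, from which the displayed formula for $\nabla^J$ is immediate by substitution into its definition, and the equivariance then a naturality argument.

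For the first identity I would exploit the compatibility relation $\omega(Y,Z)=g_J(JY,Z)$, which follows directly from $g_J(\cdot,\cdot)=\omega(\cdot,J\cdot)$ together with $J^2=-Id$ and the $J$-invariance $\omega(J\cdot,J\cdot)=\omega(\cdot,\cdot)$ built into $\J(M,\omega)$. The decisive point is that $\nabla^{g_J}$ leaves $g_J$ parallel. Expanding $(\nabla^{g_J}_X\omega)(Y,Z)$ by the Leibniz rule and writing $\omega(Y,Z)=g_J(JY,Z)$, the terms $g_J(J\nabla^{g_J}_X Y,Z)$ and $g_J(JY,\nabla^{g_J}_X Z)$ produced by $\nabla^{g_J}g_J=0$ cancel against $\omega(\nabla^{g_J}_X Y,Z)$ and $\omega(Y,\nabla^{g_J}_X Z)$ respectively, leaving
$$(\nabla^{g_J}_X\omega)(Y,Z)=g_J\big((\nabla^{g_J}_X J)Y,Z\big).$$
Comparing with $\omega(K^J(X,Y),Z)=g_J(JK^J(X,Y),Z)$ and using non-degeneracy of $g_J$ gives $JK^J(X,Y)=(\nabla^{g_J}_X J)(Y)$; applying $J$ and using $J^2=-Id$ yields $K^J(X,Y)=-J(\nabla^{g_J}_X J)(Y)$. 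The formula for $\nabla^J$ then follows by plugging this into $\nabla^J_X Y=\nabla^{g_J}_X Y+\frac13 K^J(X,Y)+\frac13 K^J(Y,X)$.

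For the equivariance, the strategy is to show that each ingredient transforms naturally under $\varphi\in\ham(M,\omega)$, which is a symplectomorphism. Starting from Proposition~\ref{prop:equivnablagJ}, $\nabla^{g_{\varphi\cdot J}}=\varphi\cdot\nabla^{g_J}$, I would compute $(\nabla^{g_{\varphi\cdot J}}_X(\varphi\cdot J))(Y)$ by unwinding the two actions: writing $\varphi\cdot J=\varphi_*\circ J\circ\varphi_*^{-1}$ and $(\varphi\cdot\nabla^{g_J})_X Y=\varphi_*\nabla^{g_J}_{\varphi_*^{-1}X}\varphi_*^{-1}Y$, the pushforward factors telescope and one obtains $(\nabla^{g_{\varphi\cdot J}}_X(\varphi\cdot J))(Y)=\varphi_*\big((\nabla^{g_J}_{\varphi_*^{-1}X}J)(\varphi_*^{-1}Y)\big)$. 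Feeding this into the first identity gives the covariance of the tensor, $K^{\varphi\cdot J}(X,Y)=\varphi_*\,K^J(\varphi_*^{-1}X,\varphi_*^{-1}Y)$. Substituting into the formula for $\nabla^J$ and invoking Proposition~\ref{prop:equivnablagJ} once more, the common $\varphi_*$ factors out, yielding exactly $\nabla^{\varphi\cdot J}=\varphi\cdot\nabla^J$.

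The only genuine subtlety, and the step I would treat most carefully, is the bookkeeping of the two distinct actions (on almost-complex structures and on linear connections) in the equivariance argument: one must track the pushforwards $\varphi_*$ and their inverses consistently so that they cancel correctly, and check that it is precisely $\varphi^*\omega=\omega$ that makes $K^J$ transform covariantly rather than acquiring extra terms. The computation in the first part is otherwise a direct, if slightly delicate, application of $\nabla^{g_J}g_J=0$.
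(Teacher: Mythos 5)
Your proof is correct and follows essentially the same route as the paper: the identity $(\nabla^{g_J}_X\omega)(Y,Z)=g_J\bigl((\nabla^{g_J}_X J)Y,Z\bigr)$ obtained from $\omega(\cdot,\cdot)=g_J(J\cdot,\cdot)$ and $\nabla^{g_J}g_J=0$, followed by nondegeneracy, is exactly the paper's computation, and the equivariance is likewise deduced from Proposition~\ref{prop:equivnablagJ} (which you merely spell out in more detail than the paper does).
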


\begin{proof}
The formula for $K^J$ follows from
\begin{eqnarray*}
(\nabla^{g_J}_X \omega)(Y,Z) & = & (\nabla^{g_J}_X g_J(J\cdot,\cdot))(Y,Z), \\
 & = & g_J(\left(\nabla^{g_J}_X J\right)Y,Z),\\
 & = & \omega(-J\left(\nabla^{g_J}_X J\right)(Y),Z).
\end{eqnarray*}

\noindent The equivariance with respect to the action of $\varphi$ is a consequence of Proposition \ref{prop:equivnablagJ}.
\end{proof}

\subsection{The Chern connection $\nablabarJ$ and the Hermitian scalar curvature}

For any $J\in \J(M,\omega)$, the Chern connection $\nablabarJ$ is a canonical $J$-linear connection on the complex vector bundle $(TM,J)$. It is defined by
$$\nablabarJ_X Y:= \nabla^{g_J}_X Y - \frac{1}{2} J\left(\nabla^{g_J}_X J\right)(Y),$$ 
for any $X,Y\in \VM$. The Chern connection $\nablabarJ$ preserves $g_J,J$ and then $\omega$, but has torsion. It also preserves the Hermitian metric 
$$h^J(X,Y):=g_J(X,Y)-i\omega(X,Y), \textrm{ for any } X,Y\in TM$$
which is $J$-linear in the first entry and $J$-anti-linear in the second one.

\begin{prop}
$\overline{\nabla}^{\varphi\cdot J}=\varphi\cdot \nablabarJ$ for any $\varphi\in \ham(M,\omega)$ and $J\in \J(M,\omega)$.
\end{prop}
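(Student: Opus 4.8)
The plan is to reduce everything to the explicit formula defining $\nablabarJ$ together with the equivariance of the Levi-Civita connection already recorded in Proposition \ref{prop:equivnablagJ}. Write $J' := \varphi\cdot J = \varphi_*\circ J\circ \varphi_*^{-1}$ and abbreviate $\tilde X := \varphi_*^{-1}X$, $\tilde Y:=\varphi_*^{-1}Y$ for $X,Y\in\VM$. By definition of the Chern connection,
$$\overline{\nabla}^{J'}_X Y = \nabla^{g_{J'}}_X Y - \tfrac12 J'\bigl(\nabla^{g_{J'}}_X J'\bigr)(Y),$$
and the goal is to match this term by term against $(\varphi\cdot\nablabarJ)_X Y = \varphi_*\,\nablabarJ_{\tilde X}\tilde Y$.

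First I would dispose of the Levi-Civita part: Proposition \ref{prop:equivnablagJ} gives $\nabla^{g_{J'}}=\varphi\cdot\nabla^{g_J}$, that is $\nabla^{g_{J'}}_X Y = \varphi_*\nabla^{g_J}_{\tilde X}\tilde Y$, which is exactly the first term in the target expression. It then remains to prove that the correction terms agree, i.e. that $J'\bigl(\nabla^{g_{J'}}_X J'\bigr)(Y) = \varphi_*\,J\bigl(\nabla^{g_J}_{\tilde X}J\bigr)(\tilde Y)$.

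The key computation is the covariant derivative of the endomorphism $J'$ with respect to $\nabla^{g_{J'}}$. Expanding $\bigl(\nabla^{g_{J'}}_X J'\bigr)(Y) = \nabla^{g_{J'}}_X(J'Y) - J'\bigl(\nabla^{g_{J'}}_X Y\bigr)$ and inserting the equivariance formula for $\nabla^{g_{J'}}$, the crucial simplification is the identity $\varphi_*^{-1}(J'Y) = J\tilde Y$, which is immediate from $J' = \varphi_* J\varphi_*^{-1}$. The two terms then collapse to
$$\bigl(\nabla^{g_{J'}}_X J'\bigr)(Y) = \varphi_*\Bigl(\bigl(\nabla^{g_J}_{\tilde X}J\bigr)(\tilde Y)\Bigr).$$
Applying $J' = \varphi_* J\varphi_*^{-1}$ to this expression, the middle factor $\varphi_*^{-1}\varphi_*$ cancels, leaving $\varphi_*\,J\bigl(\nabla^{g_J}_{\tilde X}J\bigr)(\tilde Y)$, as needed. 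Assembling the two pieces yields $\overline{\nabla}^{J'}_X Y = \varphi_*\nablabarJ_{\tilde X}\tilde Y = (\varphi\cdot\nablabarJ)_X Y$.

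The argument is entirely mechanical; the only point demanding care is the bookkeeping of the $\varphi_*$ and $\varphi_*^{-1}$ factors in the endomorphism derivative, where one must resist commuting $\varphi_*$ past $J$ improperly. Conceptually there is no genuine obstacle: the Chern connection is a universal expression built from $\nabla^{g_J}$ and $J$, and since both ingredients are equivariant (Proposition \ref{prop:equivnablagJ} and the very definition of $\varphi\cdot J$), any natural combination of them inherits the equivariance. The same reasoning incidentally reproves Proposition \ref{prop:nablaJ}'s equivariance statement for $\nabla^J$ by the identical cancellation.
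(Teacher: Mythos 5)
Your proof is correct and follows exactly the route the paper intends: the paper's own justification is the one-line remark that the claim ``again follows from Proposition \ref{prop:equivnablagJ}'', and your computation of $(\nabla^{g_{\varphi\cdot J}}_X(\varphi\cdot J))(Y)=\varphi_*\bigl((\nabla^{g_J}_{\varphi_*^{-1}X}J)(\varphi_*^{-1}Y)\bigr)$ is precisely the detail being left implicit there. Nothing to change.
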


\noindent The proof again follows from Proposition \ref{prop:equivnablagJ}.

Let us now introduce the main characters of this paper: the Hermitian Ricci form and the Hermitian scalar curvature.

The connection $\nablabarJ$ induces a connection on the complex line bundle $\Lambda^m (TM,J)$ still denoted $\nablabarJ$. Consider a local complex basis $\mathcal{Z}:=\{Z_1,\ldots,Z_m\}$ of $(TM,J)$. This basis induces a non-zero local section $\zeta:=Z_1\wedge \ldots \wedge Z_m$ of $(TM,J)$. One computes
\begin{equation*}
\nablabarJ_X \zeta:= \theta^J_{\mathcal{Z}}(X)\zeta
\end{equation*}
for $\theta^J_{\mathcal{Z}}(X):=(h^J)^{ki}h^J(\nablabarJ_X Z_i, Z_k)$, with $(h^J)^{ki}$ denoting the inverse of the matrix of $h^J$ in the basis $\mathcal{Z}$, and we use from now on the summation convention on repeated indices. The $1$-form $\theta^J_{\mathcal{Z}}$ is only locally defined, it depends on the choice of the local complex basis $\mathcal{Z}$ but its differential is globally defined.

The \emph{Hermitian Ricci form} of $(M,\omega,J)$ is the real form 
\begin{equation*}\label{def:HRicciform}
\rho^J:=i\,d\theta^J_{\mathcal{Z}}.
\end{equation*}
In the K\"ahler case, it co\"incides with the Ricci form in Equation \eqref{eq:ricciform}, but not in general.
The \emph{Hermitian scalar curvature} is the function $S^J$ such that :
\begin{equation*}  \label{def:Hscalarcurv}
\rho^J\wedge \dsubvol=\frac{1}{2} S^J \dvol,
\end{equation*}
or $S^J:=-\Lambda^{ql}\rho^J_{ql}$, for $\Lambda$ being the inverse matrix of the (real) coordinate matrix of $\omega$.

We will need the first order variation of $\rho^J$ which writes in term of the first order variation of $\nablabarJ$. Actually, varying $J$ makes the complex structure on $(TM,J)$ vary. So we need to compensate that, we follow the ideas from the book \cite{Gaud}. 

We consider a path of almost complex structures 
$$J_t:=\gamma_t \circ J \circ \gamma_t^{-1}$$
for $\gamma_t:=\exp(ta)$ and $a=\frac{1}{2} JA$ for $A\in T_J \J(M,\omega)$, so that $\ddto J_t=A$. We consider the path of $J$-linear connections
$$\widetilde{\nabla}^t:=\gamma_t^{-1}\circ \overline{\nabla}^{J_t} \circ \gamma_t.$$
Consider the local unitary complex basis $\mathcal{Z}_t:=\{\gamma_t Z_1, \ldots, \gamma_t Z_m\}$ of $(TM,J_t)$ starting from a chosen local unitary complex basis $\mathcal{Z}:=\{ Z_1, \ldots, Z_m\}$ of $(TM,J)$. One build the local non zero section $\zeta_t:=\gamma_t Z_1 \wedge\ldots\wedge \gamma_t Z_m$. Then, for any $X\in TM$,
$$\overline{\nabla}^{J_t}_X \zeta_t = \theta^{J_t}_{\mathcal{Z}_t}(X) \zeta_t.$$
On the other hand,
$$\widetilde{\nabla}^t_X\zeta =  \theta^{J_t}_{\mathcal{Z}_t}(X) \zeta.$$
which means the $1$-form $\kappa:=\ddto \theta^{J_t}_{\mathcal{Z}_t}$ is globally defined. Moreover,
$$\ddto \rho^{J_t}=i\,d\kappa.$$
The $1$-form $\kappa$ is called the \emph{first order variation of the Chern connection}.

\begin{prop}
For $X\in TM$, 
\begin{equation*}
\kappa(X)=\frac{i}{2}\, \delta^J A^{b}(X), %=\frac{i}{2} \left(\delta A\right)^{b}(X)
\end{equation*}
where $\ddto J_t=A$, $Y^b$ is the $1$-form $g_J(Y,\cdot)$ and $\delta^J T(X_1,\ldots,X_n) := -(\nabla^{g_J}_{e_i}T)(e_i,X_1,\ldots, X_n)$ for $T$ a $n$-tensor on $M$, $\{e_i\,|\,i=1,\ldots,2m\}$ a $g_J$-orthonormal frame and $X_1,\ldots,X_n \in TM$.
\end{prop}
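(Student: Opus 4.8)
The plan is to reduce $\kappa$ to a pointwise complex trace of the first-order variation of the family $\widetilde{\nabla}^t$ of connections, and then to evaluate that trace with the Chern formula for $\overline{\nabla}^J$ and Lemma~\ref{lemme:firstvarnabla}.

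First I would exploit the special choice $a=\frac{1}{2}JA$. A direct computation using $AJ=-JA$ and the symmetry of $\omega(\cdot,A\cdot)$ gives $\omega(aX,Y)+\omega(X,aY)=0$, so $a\in\mathfrak{sp}(M,\omega)$ and every $\gamma_t=\exp(ta)$ is symplectic; since $\gamma_t$ moreover intertwines $J$ and $J_t$, it is a unitary isomorphism $(TM,J,h^J)\to(TM,J_t,h^{J_t})$, i.e. $h^{J_t}(\gamma_t\cdot,\gamma_t\cdot)=h^J(\cdot,\cdot)$. Hence $\widetilde{\nabla}^t=\gamma_t^{-1}\circ\overline{\nabla}^{J_t}\circ\gamma_t$ is a Hermitian connection on the \emph{fixed} bundle $(TM,J,h^J)$, and in the fixed unitary frame $\mathcal{Z}$ one finds $\theta^{J_t}_{\mathcal{Z}_t}(X)=\sum_i h^J(\widetilde{\nabla}^t_X Z_i,Z_i)$. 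Differentiating at $t=0$ yields $\kappa(X)=\tr_{\C}\bigl(\dot{\widetilde{\nabla}}_X\bigr)$, where $\dot{\widetilde{\nabla}}:=\ddto\widetilde{\nabla}^t$ is a $J$-linear, skew-Hermitian $\End(TM)$-valued $1$-form (being the derivative of a family of Hermitian connections). Its complex trace is therefore purely imaginary, so $\kappa(X)=-\frac{i}{2}\tr_{\R}\bigl(J\,\dot{\widetilde{\nabla}}_X\bigr)$.

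Next I would differentiate $\widetilde{\nabla}^t=\gamma_t^{-1}\circ\overline{\nabla}^{J_t}\circ\gamma_t$ to get $\dot{\widetilde{\nabla}}_X=\overline{\nabla}'_X+\bigl(\overline{\nabla}^J_X a\bigr)$ with $\overline{\nabla}':=\ddto\overline{\nabla}^{J_t}$, and expand $\overline{\nabla}'$ from $\overline{\nabla}^J_X Y=\nablaLC_X Y-\frac{1}{2}J(\nablaLC_X J)Y$ and Lemma~\ref{lemme:firstvarnabla}. Applying $\tr_{\R}(J\,\cdot\,)$ then makes almost everything collapse: traces of commutators vanish, $\tr A=0$ kills the $\nablaLC_X A$ term, the contribution $\overline{\nabla}^J_X a=\frac{1}{2}J\,\overline{\nabla}^J_X A$ of $a$ drops out (as $\overline{\nabla}^J J=0$), and $\tr_{\R}(JA\,\nablaLC_X J)=0$ because $JA$ is $g_J$-symmetric while $\nablaLC_X J$ is $g_J$-skew. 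Only the Levi-Civita part remains, which Lemma~\ref{lemme:firstvarnabla} reduces (after relabelling $e_i\mapsto Je_i$ to identify the two outer terms and using once more the same symmetric/skew vanishing for the middle one) to $\tr_{\R}(J\,\dot{\widetilde{\nabla}}_X)=-\sum_i(\nablaLC_{e_i}\dot g)(X,Je_i)$ for a $g_J$-orthonormal frame $\{e_i\}$, where $\dot g(U,V):=\omega(U,AV)$ is the metric variation.

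It remains to identify this contraction. Writing $\dot g(U,V)=g_J(U,AJV)=A^b(U,JV)$, with $A^b(U,V):=g_J(AU,V)$ the symmetric $2$-tensor of $A$, and differentiating $A^b(\cdot,J\cdot)$, the contraction splits into a divergence piece equal to $-\delta^J A^b(X)$ (using the symmetry of $A^b$) plus a correction $g_J(AX,J\xi)$, where $\xi:=\sum_i(\nablaLC_{e_i}J)e_i$. The main obstacle I expect is the vanishing of this correction, and this is exactly where the symplectic hypothesis intervenes: since $d\omega=0$ and $\star\omega$ is proportional to $\omega^{m-1}$, one has $d(\star\omega)\propto\omega^{m-2}\wedge d\omega=0$, so the codifferential $\delta^J\omega=-\sum_i(\nablaLC_{e_i}\omega)(e_i,\cdot)$ vanishes; as $\delta^J\omega=-g_J(\xi,\cdot)$ this forces $\xi=0$. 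Hence $\tr_{\R}(J\,\dot{\widetilde{\nabla}}_X)=-\delta^J A^b(X)$ and $\kappa(X)=\frac{i}{2}\delta^J A^b(X)$, as claimed.
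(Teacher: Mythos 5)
The paper does not actually prove this proposition: it defers entirely to Gauduchon's notes \cite{Gaud}, so there is no in-paper argument to compare yours against. Your blind proof is correct and self-contained, and it follows what is essentially the standard route. The key reductions all check out: $a=\frac12 JA$ is indeed in $\mathfrak{sp}(TM,\omega)$ because $A$ is $g_J$-symmetric, so $\gamma_t$ is a unitary isomorphism $(TM,J,h^J)\to(TM,J_t,h^{J_t})$ and $\kappa(X)=\mathrm{tr}_{\C}(\dot{\widetilde{\nabla}}_X)=-\frac{i}{2}\mathrm{tr}_{\R}(J\dot{\widetilde{\nabla}}_X)$ as you claim (the difference of connections is $J$-linear and skew-Hermitian, so its complex trace is purely imaginary). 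The cancellations under $\mathrm{tr}_{\R}(J\,\cdot\,)$ are all legitimate: $\mathrm{tr}\,A=0$ since $A$ anticommutes with $J$, commutator traces vanish, $\overline{\nabla}^J_Xa=\frac12 J\overline{\nabla}^J_XA$ contributes $-\frac12\mathrm{tr}(\overline{\nabla}^J_XA)=0$, and $\mathrm{tr}(JA\,\nablaLC_XJ)=0$ because $JA$ is $g_J$-symmetric while $\nablaLC_XJ$ is $g_J$-skew; the same symmetric-times-skew vanishing disposes of the middle term of Lemma \ref{lemme:firstvarnabla}, and the relabelling $e_i\mapsto Je_i$ correctly doubles the outer terms. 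The one place where a hypothesis genuinely enters is the vanishing of $\xi=\sum_i(\nablaLC_{e_i}J)e_i$, and your identification $g_J(\xi,\cdot)=-\delta^J\omega$ together with $\star\omega\propto\omega^{m-1}$ and $d\omega=0$ is the right (pointwise, compactness-free) justification; this is exactly the coclosedness of $\omega$ on an almost-K\"ahler manifold. As a sanity check, your formula reproduces $\ddto\rho^{J_t}=i\,d\kappa=-\frac12 d\delta^JA^{b}$, consistent with Corollary \ref{cor:variationHermitian}. The only cosmetic blemish is a slightly ambiguous sign bookkeeping in the last paragraph (which contraction the ``divergence piece $-\delta^JA^b(X)$'' refers to), but the final signs are correct.
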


\noindent For a proof of the above Proposition, see Gauduchon's book \cite{Gaud}.

\begin{cor} \label{cor:variationHermitian}
For a path $t\mapsto J_t\in \J(M,\omega)$, with $\ddto J_t=A$, one computes
$$\ddto \rho^{J_t}=-\frac{1}{2} d\delta^J A^{b} \textrm{ and } \ddto S^{J_t}=\frac{1}{2}\Lambda^{ql}\left(d\delta^J A^{b}\right)_{ql}.$$
\end{cor}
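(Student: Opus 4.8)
The plan is to derive both identities directly from the formula for the first order variation $\kappa$ of the Chern connection established in the preceding Proposition, combined with the two relations $\ddto \rho^{J_t} = i\,d\kappa$ and $S^J = -\Lambda^{ql}\rho^J_{ql}$ already recorded above. No genuinely new computation is required: the whole content lies in assembling these ingredients and keeping careful track of which objects do and do not depend on the deformation parameter $t$.

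For the first formula, I would simply substitute the expression $\kappa = \tfrac{i}{2}\delta^J A^b$ into the relation $\ddto \rho^{J_t} = i\,d\kappa$. This gives $\ddto \rho^{J_t} = i\,d\bigl(\tfrac{i}{2}\delta^J A^b\bigr) = -\tfrac{1}{2}\,d\delta^J A^b$, using $i^2 = -1$ together with the $t$-independence of the exterior derivative $d$ (which is what lets $d$ pass through $\ddt$ and is implicit in the stated identity $\ddto\rho^{J_t}=i\,d\kappa$).

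For the second formula, I would differentiate the defining relation $S^{J_t} = -\Lambda^{ql}(\rho^{J_t})_{ql}$ in $t$. The crucial observation, and really the only point requiring care, is that $\Lambda$ is the inverse of the coordinate matrix of $\omega$, which is held fixed throughout the deformation $J_t \in \J(M,\omega)$. Hence $\Lambda$ carries no $t$-dependence, the Leibniz rule leaves only the term in which $\ddt$ acts on $\rho^{J_t}$, and inserting the first formula yields $\ddto S^{J_t} = -\Lambda^{ql}\bigl(-\tfrac{1}{2}\,d\delta^J A^b\bigr)_{ql} = \tfrac{1}{2}\Lambda^{ql}(d\delta^J A^b)_{ql}$, as claimed.

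The hard part is genuinely upstream of this Corollary: it is the preceding Proposition, whose (Gauduchon-style) proof identifies $\kappa$ with $\tfrac{i}{2}\delta^J A^b$ by compensating for the variation of the complex structure on $(TM,J_t)$ through the conjugation by $\gamma_t$. Granting that identification, the Corollary is a one-line consequence, with the constancy of $\omega$, and therefore of $\Lambda$, as the sole subtlety worth flagging.
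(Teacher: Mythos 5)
Your proposal is correct and follows exactly the route the paper intends: the Corollary is an immediate consequence of the preceding Proposition's identity $\kappa=\tfrac{i}{2}\delta^J A^{b}$ together with $\ddto\rho^{J_t}=i\,d\kappa$ and $S^J=-\Lambda^{ql}\rho^J_{ql}$, with the $t$-independence of $\omega$ (hence of $\Lambda$) being the only point to note. The paper leaves this proof implicit for precisely this reason, so there is nothing to add.
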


\begin{rem} \label{rem:deltas}
We will keep the superscript $J$ in $\delta^J$ all along to emphasize its dependence in $J$ through $g_J$ but also to avoid confusion with the $\deltaFed$ from Fedosov construction.\\
The musical isomorphism $b$ also depends on $J$, when this dependence will be investigated we will write $b_J$.
\end{rem}

Finally, the equivariance of the Chern connection translates into the equivariance of the Hermtian Ricci form.

\begin{lemma} \label{lemme:equivHRicci}
For $J\in \J(M,\omega)$ and $\varphi\in \ham(M,\omega)$,
$$\rho^{\varphi^{-1}.J}=\varphi^*\rho^J.$$
\end{lemma}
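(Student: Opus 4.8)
The plan is to propagate the equivariance of the Chern connection, established in the proposition immediately preceding this lemma, through the construction of $\rho^J$. Setting $J':=\varphi^{-1}\cdot J=\varphi_*^{-1}\circ J\circ\varphi_*$ and substituting $\varphi^{-1}$ for $\varphi$ in that proposition gives $\overline{\nabla}^{J'}=\varphi^{-1}\cdot\overline{\nabla}^J$, that is $\overline{\nabla}^{J'}_X Y=\varphi_*^{-1}\,\overline{\nabla}^J_{\varphi_* X}\,\varphi_* Y$. Because the induced connection on the complex line bundle $\Lambda^m(TM,J)$ is obtained functorially from the connection on $(TM,J)$, and $\varphi_*$ acts diagonally on top exterior powers, the same intertwining relation $\overline{\nabla}^{J'}_X=\varphi_*^{-1}\circ\overline{\nabla}^J_{\varphi_* X}\circ\varphi_*$ holds for the induced connections on $\Lambda^m$.

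First I would fix a local $J$-complex frame $\mathcal{Z}=\{Z_1,\dots,Z_m\}$ of $(TM,J)$ over an open set $U$ and transport it to $J'$ by the pullback $Z_i'(x):=\varphi_*^{-1}\big(Z_i(\varphi(x))\big)$, defined over $\varphi^{-1}(U)$. The intertwining identity $J'\circ\varphi_*^{-1}=\varphi_*^{-1}\circ J$ shows that $\mathcal{Z}':=\{Z_1',\dots,Z_m'\}$ is a $J'$-complex frame, with associated local section $\zeta':=Z_1'\wedge\dots\wedge Z_m'$ satisfying $\varphi_*\zeta'=\zeta$ and hence $\varphi_*^{-1}\zeta=\zeta'$.

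Then I would compute $\theta^{J'}_{\mathcal{Z}'}$ directly from its defining relation. Using the intertwining on $\Lambda^m$ together with $\varphi_*\zeta'=\zeta$ one obtains
$$\overline{\nabla}^{J'}_X\zeta'=\varphi_*^{-1}\,\overline{\nabla}^J_{\varphi_* X}\zeta=\theta^J_{\mathcal{Z}}(\varphi_* X)\,\varphi_*^{-1}\zeta=\big(\varphi^*\theta^J_{\mathcal{Z}}\big)(X)\,\zeta',$$
so that $\theta^{J'}_{\mathcal{Z}'}=\varphi^*\theta^J_{\mathcal{Z}}$ as $1$-forms on $\varphi^{-1}(U)$. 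Applying $i\,d$ and using that exterior differentiation commutes with pullback gives $\rho^{J'}=i\,d\theta^{J'}_{\mathcal{Z}'}=i\,\varphi^*d\theta^J_{\mathcal{Z}}=\varphi^*\rho^J$ on $\varphi^{-1}(U)$.

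The step requiring the most care is the bookkeeping of base points combined with the fact that $\theta^J_{\mathcal{Z}}$ is only locally defined and depends on the choice of frame: the identity $\theta^{J'}_{\mathcal{Z}'}=\varphi^*\theta^J_{\mathcal{Z}}$ holds only chart by chart. This causes no trouble, however, since $d$ is a local operator and both $\rho^{J'}$ and $\varphi^*\rho^J$ are globally defined $2$-forms; agreement on every chart $\varphi^{-1}(U)$ therefore forces equality on all of $M$. There is no genuine analytic obstacle here, the content being entirely the functoriality already encoded in the equivariance $\overline{\nabla}^{\varphi\cdot J}=\varphi\cdot\overline{\nabla}^J$.
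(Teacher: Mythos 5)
Your proof is correct and follows exactly the route the paper intends: the paper omits the argument, stating only that ``the equivariance of the Chern connection translates into the equivariance of the Hermitian Ricci form,'' and your chart-by-chart transport of the frame, the resulting identity $\theta^{J'}_{\mathcal{Z}'}=\varphi^*\theta^J_{\mathcal{Z}}$, and the naturality of $d$ under pullback are precisely the details being suppressed.
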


\section{Formal connections and curvature}

\subsection{Fedosov construction}

On $(M,\omega)$, consider a basis $\{e_1,\ldots,e_{2n}\}$ of $T_xM$ at $x\in M$ and its dual basis $\{y^1,\ldots, y^{2n}\}$ of $T^*_xM$. The algebra of formal symmetric forms on $T_xM$ of the kind:
$$a(y,\nu):=\sum_{2r+k=0}^{\infty} \nu^k a_{k,i_1\ldots i_r}y^{i_1}\ldots y^{i_r},$$
where $a_{k,i_1\ldots i_r}$ symmetric in $i_1\ldots i_r$ and $2k+r$ is the total degree, with product, 
\begin{eqnarray*}
(a\circ b)(y,\nu) & := &\left.\left( \exp\left(\frac{\nu}{2}\Lambda^{ij} \partial_{y^i} \partial_{z^j}\right)a(y,\nu)b(z,\nu)\right)\right|_{y=z}, \nonumber \\
\end{eqnarray*}
for two formal symmetric tensors $a(y,\nu)$ and  $b(y,\nu)$, is called the formal Weyl algebra $\WW_x$.

The formal Weyl algebra bundle is the bundle $\W:=\bigsqcup_{x\in M}\WW_x$ over $M$. Denote by $\Gamma\W\otimes \Lambda M$ the space of differential forms with values in sections of $\W$. Such a differential form writes locally as:
\begin{equation} \label{eq:sectionofW}
\sum_{2k+l\geq 0,\, k,l\geq 0, p\geq 0} \nu^k a_{k,i_1\ldots i_l,j_1\ldots j_p}(x)y^{i_1}\ldots y^{i_l}dx^{j_1}\wedge \ldots \wedge dx^{j_p},
\end{equation}
with $a_{k,i_1\ldots i_l,j_1\ldots j_p}(x)$ are symmetric in the $i$'s and antisymmetric in the $j$'s. The space $\Gamma \W\otimes \Lambda^*M$ is filtered with respect to the total degree
$$\Gamma \W\otimes \Lambda^*M \supset \Gamma \W^1\otimes \Lambda^*M\supset \Gamma \W^2\otimes \Lambda^*M \supset \ldots.$$
The $\circ$-product extends fiberwisely to $\Gamma \W\otimes \Lambda^*M$ making it an algebra. That is, for $a, b\in \Gamma \W$ and $\alpha, \beta\in \Omega^*(M)$, we define
$(a\otimes \alpha) \circ (b\otimes \beta) := a\circ b \otimes \alpha\wedge \beta$. It is a graded Lie algebra for the graded commutator $[s,s']:=s\circ s'- (-1)^{q_1q_2}s'\circ s$ where $s$, resp. $s'$ are of anti-symmetric degree $q_1$, resp. $q_2$ makes $\W$-valued forms.

From a symplectic connection $\nabla$ on $(M,\omega)$, one defines a derivation $\partial$ of anti-symmetric degree $+1$ on $\W$-valued forms by :
\begin{equation*}
\partial a := da + \frac{1}{\nu}[\overline{\Gamma},a]  \textrm{ for } a\in \Gamma \W\otimes \Lambda M,
\end{equation*}
where $\overline{\Gamma}:=\frac{1}{2}\omega_{lk}\Gamma^k_{ij}y^ly^jdx^i$, for $\Gamma^k_{ij}$ the Christoffel symbols of $\nabla$ on a Darboux chart. 

Setting $\overline{R}:= \frac{1}{4} \omega_{ir}R^r_{jkl}y^iy^jdx^k\wedge dx^l$, for $R^r_{jkl}:=\left(R(\partial_k,\partial_l)\partial_j\right)^r$ the components of the curvature tensor of $\nabla$, the curvature of $\partial$ is
\begin{equation*}
\partial\circ \partial\, a := \frac{1}{\nu}[\overline{R},a].
\end{equation*}

We look for flat connections on $\Gamma \W$ of the form
\begin{equation*} \label{eq:defD}
D a:=\partial a - \deltaFed a + \frac{1}{\nu}[r,a],
\end{equation*}
for $r$ a $\W$-valued $1$-form and $\deltaFed$ is defined by
\begin{equation*} \label{eq:deltadef}
\deltaFed(a) := dx_k\wedge \partial_{y_k} a=-\frac{1}{\nu}[\omega_{ij}y^i dx^j,a],
\end{equation*}
the F subscript is there to avoid confusion with $\delta^J$, see remark \ref{rem:deltas}.
The curvature of $D$ is
\begin{equation*}
D^2 a = \frac{1}{\nu}\left[\overline{R} + \partial r - \deltaFed r + \frac{1}{2\nu}[r,r]-\omega,a\right].
\end{equation*}

Define 
$$\deltaFed^{-1} a_{pq}:= \frac{1}{p+q}y^ki(\partial_{x^k})a_{pq} \textrm{ if } p+q>0 \textrm{ and } \deltaFed^{-1}a_{00}=0,$$
where $a_{pq}$ is a $q$-form with $p$ $y$'s and $p+q>0$. For any given closed central $2$-form $\Omega$,
there exists a unique solution $r \in \Gamma \W \otimes \Omega^1 M$ with $\W$-degree at least $3$ of equation:
\begin{equation*}\label{eq:req}
\overline{R} + \partial r - \deltaFed r + \frac{1}{\nu}r\circ r = \Omega,
\end{equation*}
and satisfying $\deltaFed^{-1}r=0$, see Fedosov \cite{fed2}. Because $\Omega$ is central for the $\circ$-product, it makes $D$ flat.

To the flat connection $D$, one attaches the space of flat sections $\Gamma \W_{D} := \{a\in \Gamma \W | D a=0\}$. Flat sections form an algebra for the $\circ$-product as $D$ is a derivation. The symbol map is defined by $\sigma :a\in \Gamma \W_{D} \mapsto \left.a\right|_{y=0}\in C^{\infty}(M)[[\nu]]$. The map $\sigma$ is a bijection with inverse $Q$ (Fedosov \cite{fed2}) defined by 
\begin{equation*} \label{eq:defQ}
Q:=\sum_{k\geq 0} \left(\deltaFed^{-1}(\partial + \frac{1}{\nu}[r,\cdot])\right)^k.
\end{equation*}

The \emph{Fedosov star product} $*$ build with the data of $\Omega$ a formal closed $2$-form and $\nabla$ a symplectic connection is, for all $F,G\in C^{\infty}(M)[[\nu]]$:
$$F*G:= \left.\left(Q(F)\circ Q(G)\right)\right|_{y=0}.$$

\begin{defi}
To $J\in \J(M,\omega)$, we attach the \emph{star product $*_J$} which is the Fedosov star product build with $\Omega=\nu\rho^J$ and symplectic connection $\nabla^J$.
\end{defi}

In the sequel, when dealing with the star product $*_J$, we may emphasize the dependence in $J$ by writing $\overline{\Gamma}^J,r^J, D^J, Q^J,\ldots $ for the corresponding ingredients of the Fedosov construction performed with the symplectic connection $\nabla^J$ and $\Omega=\nu\rho^J$.

%Hence, the $\circ$-product induces a star product $*$ on $C^{\infty}(M)[[\nu]]$, called Fedosov star product.

\subsection{The star products $\{*_J\}_{J\in \J(M,\omega)}$ and a formal connection}

\begin{defi}
Define the star product algebra bundle $\mathcal{V}$ over $\J(M,\omega)$ by 
$$\mathcal{V}:= \J(M,\omega) \times C^{\infty}(M)[[\nu]] \stackrel{p}{\rightarrow} \J(M,\omega),$$ 
where the fiber $J\in \J(M,\omega)$ is equipped with the star product $*_{J}$ and $p$ is the projection. 
\end{defi}

\noindent A \emph{formal connection} $\D$ on sections of $\V$ is an operator of the form 
$$d^{\J}+\beta,$$
with a formal series $\beta=\sum_{k\geq 1} \nu^k \beta_k$ of $1$-forms on $\J(M\omega)$ with values in differential operators on functions of $M$. We say the connection is \emph{compatible} with the family of star products $\{*_{J}\}_{J\in \J(M,\omega)}$ when, for all sections $F,G$ of $\V$:
$$\D(F*_{J}G)=\D(F)*_{J} G + F*_{J} \D(G)$$

Such a connection exists \cite{AMS}. To define it one needs two technical lemmas about Fedosov construction of star products.

\begin{lemma}\label{lemme:Dinverse}
Suppose $b \in \Gamma \W\otimes \Lambda^1M$ satisfies $Db = 0$. Then the equation $Da = b$ admits a
unique solution $a \in \Gamma \W$, such that $a|_{y=0} = 0$, it is given by
$$b=D^{-1}a:=-Q(\deltaFed^{-1}a).$$
\end{lemma}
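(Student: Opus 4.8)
The plan is to dispatch uniqueness immediately and then reduce existence to the standard homotopy identities for $\deltaFed$ combined with the flatness $D^2=0$. Throughout I would write $D=P-\deltaFed$ with $P:=\partial+\frac{1}{\nu}[r,\cdot]$, use the recursion $Q=\mathrm{Id}+\deltaFed^{-1}P\,Q$ coming from the defining series of $Q$, and use the Hodge-type decomposition $\mathrm{Id}=\deltaFed\deltaFed^{-1}+\deltaFed^{-1}\deltaFed+\pi_0$ on $\W$-valued forms, where $\pi_0$ projects onto the component of $y$-degree and form-degree zero. I also record the two bidegree facts that $\deltaFed^{-1}$ annihilates $0$-forms and that $\pi_0$ annihilates forms of positive degree.

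For uniqueness, if $a_1,a_2$ both solve $Da=b$ with vanishing symbol, then $a_1-a_2$ is a flat section with $(a_1-a_2)|_{y=0}=0$; since $\sigma:\Gamma\W_{D}\to C^{\infty}(M)[[\nu]]$ is a bijection with inverse $Q$, this forces $a_1-a_2=Q(0)=0$.

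For existence I would set $a:=-Q(\deltaFed^{-1}b)$ (so that the displayed operator $D^{-1}$ is $-Q\deltaFed^{-1}$) and check the two requirements. That $a|_{y=0}=0$ is clear: $\deltaFed^{-1}b$ and every term $(\deltaFed^{-1}P)^{k}(\deltaFed^{-1}b)$ carry an overall factor $y$ coming from the leftmost $\deltaFed^{-1}$, hence vanish at $y=0$. The real content is $Da=b$, which I would organize around the error $w:=Da-b$, a $\W$-valued $1$-form, and prove $w=0$ in three steps. First, feeding the recursion into $a=-Q(\deltaFed^{-1}b)$ gives $a=\deltaFed^{-1}Pa-\deltaFed^{-1}b$; applying $\deltaFed^{-1}$ to $w=Pa-\deltaFed a-b$ and using this relation together with $a=\deltaFed^{-1}\deltaFed a$ (valid since $\deltaFed^{-1}a=0$ and $\pi_0a=0$ for the $0$-form $a$) yields $\deltaFed^{-1}w=0$, whence the Hodge decomposition of the $1$-form $w$ collapses to $w=\deltaFed^{-1}\deltaFed w$. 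Second, flatness $D^2=0$ (here $\Omega=\nu\rho^J$ is central) and the hypothesis $Db=0$ give $Dw=D^2a-Db=0$, i.e. $Pw=\deltaFed w$; applying $\deltaFed^{-1}$ and invoking the previous step gives the fixed-point relation $w=\deltaFed^{-1}Pw$. Third, iterating this relation produces $w=(\deltaFed^{-1}P)^{n}w$ for every $n$.

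The step I expect to be the only genuine obstacle is controlling this last iteration: one must verify that $\deltaFed^{-1}P$ strictly raises the total degree $2k+l$, since $\deltaFed^{-1}$ raises the $y$-degree by one while $P$ (through $d$, $\frac{1}{\nu}[\overline{\Gamma},\cdot]$ and $\frac{1}{\nu}[r,\cdot]$) never lowers total degree. Granting this, $(\deltaFed^{-1}P)^{n}w$ lies in $\Gamma\W^{n}\otimes\Lambda^1M$ for all $n$, so $w$ sits in the intersection of the filtration and must vanish, giving $Da=b$. The remaining work is purely formal bookkeeping with $\deltaFed^2=0$, $\deltaFed\deltaFed^{-1}\deltaFed=\deltaFed$ and the bidegree rules, so no real difficulty is hidden there.
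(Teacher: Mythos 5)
Your argument is correct: the paper itself gives no proof of this lemma (it is quoted as a known technical fact from Fedosov and Andersen--Masulli--Sch\"atz), and your reduction via the recursion $Q=\mathrm{Id}+\deltaFed^{-1}PQ$, the homotopy identity $\mathrm{Id}=\deltaFed\deltaFed^{-1}+\deltaFed^{-1}\deltaFed+\pi_0$, flatness of $D$, and the degree-raising fixed-point iteration is exactly the standard proof those references rely on. You also correctly read the displayed formula as $a=D^{-1}b:=-Q(\deltaFed^{-1}b)$, repairing the typo in the statement.
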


%In the sequel, to emphasize the dependence of $*$ (resp. $r$, $D$ and $Q$) in the choices $\nabla$ and $\Omega$, we will write $*_{\nabla,\Omega}$ (resp. $r^{\nabla,\Omega}$, $D^{\nabla,\Omega}$ and $Q^{\nabla,\Omega}$) and simply $*_{\nabla}$ (resp. $r^{\nabla}$,$D^{\nabla}$ and $Q^{\nabla}$) when $\Omega=0$.

As in \cite{LLFlast,LLFformalDonaldson}, we make use of a canonical lift of smooth path on the base manifold to isomorphisms of Fedosov star products algebra. 

Define sections of the extended bundle $\W^+\supset \W$ as locally of the form
\begin{equation*} \label{eq:sectionofW+}
\sum_{2k+l\geq 0, l\geq 0} \nu^k a_{k,i_1\ldots i_l}(x)y^{i_1}\ldots y^{i_l}.
\end{equation*}
similar to \eqref{eq:sectionofW}, with $p=0$, but we allow $k$ to take negative values, the total degree $2k+l$ of any term must remain nonnegative and in each given nonnegative total degree there is a finite number of terms.

Given a smooth path $t\mapsto J_t \in \J(M,\omega)$ with $\ddt J_t := A_t$. Hence, by Corollary \ref{cor:variationHermitian}, $\ddt \rho^{J_t}=-\frac{1}{2}d(\delta^{J_t} A_t^{b_{J_t}})$. The following Theorem comes from \cite{fed2} and is adapted to the particular case of Fedosov star products of the form of $*_{J}$.

\begin{theorem}\label{theor:smoothisom}
Consider smooth paths $t\in [0,1] \mapsto J_t\in \J(M,\omega)$. 
Then there exists maps $B_t:\Gamma\W \rightarrow \Gamma \W$ defined by
$$B_t a:=v_t\circ a \circ v_t^{-1}$$
for $v_t \in \Gamma \W^+$ being the unique solution of the initial value problem:
\begin{equation*} \label{eq:vt}
\left\{\begin{array}{rcl}
\frac{d}{dt}v_t & = & \frac{1}{\nu}h_t\circ v_t \\
v_0 &= & 1
\end{array}\right.
\end{equation*}
with
\begin{equation*} \label{eq:ht}
h_t:=-(D^{J_t})^{-1}\left(\ddt \overline{\Gamma}^{J_t}+\ddt r^{J_t} + \frac{\nu}{2} \delta^{J_t} A_t^{b_{J_t}}\right).
\end{equation*}
Moreover, $B_t(D^{J_0} a)= D^{J_t}(B_t a)$ for all $a\in \Gamma\W$ so that $$\left.B_t\right|_{\Gamma\W_{D^{J_0}}}:\Gamma\W_{D^{J_0}} \rightarrow \Gamma \W_{D^{J_t}}$$ is an isomorphism of flat sections algebras and hence
\begin{equation} \label{eq:equivparall}
\sigma\circ B_t \circ Q^{J_0}:(C^{\infty}(M)[[\nu]],*_{J_0}) \rightarrow (C^{\infty}(M)[[\nu]],*_{J_t})
\end{equation}
is an equivalence of star product algebras.
\end{theorem}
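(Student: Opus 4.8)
The plan is to reduce everything to the single intertwining identity $D^{J_t}\circ B_t=B_t\circ D^{J_0}$. Once this is established, the remaining assertions are formal: $B_t$ is conjugation by the invertible element $v_t\in\Gamma\W^+$, hence automatically an automorphism of the fibrewise $\circ$-product (with inverse conjugation by $v_t^{-1}$); intertwining then sends $\Gamma\W_{D^{J_0}}$ into $\Gamma\W_{D^{J_t}}$ and restricts to an isomorphism of flat-section algebras; and composing with the Fedosov quantization maps $Q^{J_0}$ and $\sigma$ turns this into the claimed equivalence \eqref{eq:equivparall}.

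First I would set up the existence of $v_t$. Write $\gamma_t:=\ddt\overline{\Gamma}^{J_t}+\ddt r^{J_t}$, so that $h_t=-(D^{J_t})^{-1}\bigl(\gamma_t+\tfrac\nu2\delta^{J_t}A_t^{b_{J_t}}\bigr)$. A Weyl-degree count shows $\gamma_t+\tfrac\nu2\delta^{J_t}A_t^{b_{J_t}}$ has total degree $\ge 2$, so after applying $(D^{J_t})^{-1}=-Q^{J_t}\deltaFed^{-1}$ the element $\tfrac1\nu h_t$ has strictly positive total degree. The linear initial value problem $\ddt v_t=\tfrac1\nu h_t\circ v_t,\ v_0=1$ is then solved uniquely total degree by total degree, by a recursion terminating in each fixed degree, and yields $v_t=1+(\text{terms of positive total degree})\in\Gamma\W^+$, smooth in $t$ and invertible.

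The heart of the matter, and the main obstacle, is to show that $h_t$ is well defined, i.e. that $\gamma_t+\tfrac\nu2\delta^{J_t}A_t^{b_{J_t}}$ is $D^{J_t}$-closed, so that Lemma \ref{lemme:Dinverse} applies and gives $D^{J_t}h_t=-\bigl(\gamma_t+\tfrac\nu2\delta^{J_t}A_t^{b_{J_t}}\bigr)$. This is where the specific data $\Omega=\nu\rho^{J}$, $\nabla=\nabla^{J}$ of the family $\{*_J\}$ must be reconciled. I would package the connection as $D^{J_t}=d+\tfrac1\nu[\,\hat A_t,\cdot\,]$ with $\hat A_t:=\overline{\Gamma}^{J_t}+r^{J_t}+\omega_{ij}y^idx^j$; a direct expansion of its abelian curvature, using $\deltaFed\overline{\Gamma}^{J_t}=0$ and $\deltaFed(\omega_{ij}y^idx^j)=2\omega$, yields the exact central form
\[
\mathcal W_t:=d\hat A_t+\tfrac1{2\nu}[\hat A_t,\hat A_t]=\overline{R}^{J_t}+\partial^{J_t}r^{J_t}-\deltaFed r^{J_t}+\tfrac1\nu r^{J_t}\circ r^{J_t}-\omega=\nu\rho^{J_t}-\omega,
\]
the last equality being the defining equation of $r^{J_t}$. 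Since $\hat A_t$ and $\dot{\hat A}_t$ are $1$-forms, differentiating in $t$ gives the exact identity $\ddt\mathcal W_t=d\dot{\hat A}_t+\tfrac1\nu[\hat A_t,\dot{\hat A}_t]=D^{J_t}\gamma_t$. On the other hand $\ddt\mathcal W_t=\nu\,\ddt\rho^{J_t}=-\tfrac\nu2\,d\bigl(\delta^{J_t}A_t^{b_{J_t}}\bigr)$ by Corollary \ref{cor:variationHermitian}; as a scalar ($y$-independent) form $\alpha$ is central and satisfies $D^{J_t}\alpha=d\alpha$, this equals $-D^{J_t}\bigl(\tfrac\nu2\delta^{J_t}A_t^{b_{J_t}}\bigr)$. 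Combining the two expressions for $\ddt\mathcal W_t$ gives $D^{J_t}\bigl(\gamma_t+\tfrac\nu2\delta^{J_t}A_t^{b_{J_t}}\bigr)=0$, the required closedness.

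Finally I would prove the intertwining by a uniqueness argument. For fixed $a\in\Gamma\W$ set $w_t:=B_ta$ and $\Phi_t:=D^{J_t}w_t-B_t(D^{J_0}a)$; differentiating the conjugations gives $\ddt w_t=\tfrac1\nu[h_t,w_t]$ and $\ddt B_t(D^{J_0}a)=\tfrac1\nu[h_t,B_t(D^{J_0}a)]$. Using that the $t$-derivative of $D^{J_t}$ acts by $b\mapsto\tfrac1\nu[\gamma_t,b]$, the derivation property $D^{J_t}[h_t,\cdot]=[D^{J_t}h_t,\cdot]+[h_t,D^{J_t}\,\cdot\,]$ (with $h_t$ of antisymmetric degree $0$), and the fact that $\gamma_t+D^{J_t}h_t=-\tfrac\nu2\delta^{J_t}A_t^{b_{J_t}}$ is central and hence commutes with $w_t$, all inhomogeneous terms cancel and I obtain the linear homogeneous equation $\ddt\Phi_t=\tfrac1\nu[h_t,\Phi_t]$ with $\Phi_0=0$; uniqueness forces $\Phi_t\equiv0$, i.e. $D^{J_t}B_t=B_tD^{J_0}$. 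Restricting to $a\in\Gamma\W_{D^{J_0}}$ gives $D^{J_t}(B_ta)=0$, so $B_t$ maps $\Gamma\W_{D^{J_0}}$ into $\Gamma\W_{D^{J_t}}$ and, being a $\circ$-automorphism, restricts to an isomorphism of flat-section algebras. As $\sigma$ and $Q^{J_0}$ are the Fedosov algebra isomorphisms and $v_t=1+(\text{positive degree})$, the composite $\sigma\circ B_t\circ Q^{J_0}$ is $\nu$-linear, equals the identity at order $\nu^0$, and intertwines $*_{J_0}$ with $*_{J_t}$, hence is an equivalence of star product algebras.
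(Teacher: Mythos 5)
Your proof is correct and is essentially the argument the paper relies on: the paper gives no proof of this theorem, simply citing Fedosov \cite{fed2} ``adapted to the particular case of $*_J$''. You have supplied that adaptation correctly --- in particular the one nontrivial point, namely that $\ddt\overline{\Gamma}^{J_t}+\ddt r^{J_t}+\frac{\nu}{2}\delta^{J_t}A_t^{b_{J_t}}$ is $D^{J_t}$-closed because the $t$-derivative of the Weyl curvature $\nu\rho^{J_t}-\omega$ equals $-d\bigl(\frac{\nu}{2}\delta^{J_t}A_t^{b_{J_t}}\bigr)$ by Corollary \ref{cor:variationHermitian}, so that Lemma \ref{lemme:Dinverse} makes $h_t$ well defined and the conjugation/uniqueness argument yields the intertwining.
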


The dependence of $h_t$ in $J_t$ and its covariant derivatives is polynomial which makes the paths $t\mapsto h_t$  and $t\mapsto v_t$ smooth.

Following \cite{AMS}, one defines a compatible formal connection $\D$ by interpreting the above Theorem as a parallel lift of the path $t\mapsto J_t$.

\begin{defi} \label{def:formalconn}
For $A\in T_J\J(M,\omega)$, with $t\mapsto J_t$ so that $\ddto J_t=A$, define :
\begin{itemize}
\item the \emph{connection $1$-form} $\alpha\in \Omega^1(\J(M,\omega),\Gamma\W^3)$ by
\begin{equation*} \label{eq:defalpha}
\alpha_{J}(A):=(D^{J})^{-1}\left(\left.\frac{d}{dt}\right|_0\overline{\Gamma}^{J_t}+ \ddto r^{J_t} + \frac{\nu}{2} \delta^J A_t^{b_J}\right),
\end{equation*}
\item the $1$-form $\beta$ with values in formal differential operators:
\begin{equation*}
\beta_{J}(A)(F):= \left.\invnu[\alpha_{J}(A),Q^{J}(F)]\right|_{y=0},\,\textrm{ for }F\in C^{\infty}(M)[[\nu]],
\end{equation*}
\item the \emph{formal connection} $\D:= d^{\J}+\beta$.
\end{itemize}
\end{defi}

\begin{prop}
$\D$ is a formal connection on $\V$ compatible with the family of Fedosov star products $\{*_{J}\}_{J\in \J(M,\omega)}$.
Moreover, the parallel transport for $\D$ along the path $t\mapsto J_t\in \J(M,\omega)$ is given by the equivalence of star product algebra obtained from Theorem \ref{theor:smoothisom}.
\end{prop}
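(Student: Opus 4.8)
The plan is to prove the two assertions in the reverse of the stated order: I would first identify the $\D$-parallel transport along a path with the star-product equivalence $T_t:=\sigma\circ B_t\circ Q^{J_0}$ of Theorem~\ref{theor:smoothisom}, and then read off compatibility from the fact that each $T_t$ is an algebra isomorphism. That $\D=d^{\J}+\beta$ is a bona fide formal connection is essentially built into Definition~\ref{def:formalconn}: since $\alpha_J(A)\in\Gamma\W^3$ by construction, the operation $F\mapsto\invnu[\alpha_J(A),Q^J(F)]\big|_{y=0}$ is a formal differential operator whose $\nu$-expansion starts at order $\nu^1$, so $\beta=\sum_{k\geq 1}\nu^k\beta_k$ has the required shape; this is the content of \cite{AMS}, so I would only sketch the degree count.

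For the parallel transport, fix $F_0\in C^{\infty}(M)[[\nu]]$ and a smooth path $t\mapsto J_t$ with $J_0=J$, $\ddto J_t=A$, and set $a_0:=Q^{J_0}(F_0)\in\Gamma\W_{D^{J_0}}$, which does not depend on $t$. Because $B_t$ maps $\Gamma\W_{D^{J_0}}$ isomorphically onto $\Gamma\W_{D^{J_t}}$ and $Q^{J_t}\circ\sigma$ is the identity on $D^{J_t}$-flat sections, one gets $B_t\circ Q^{J_0}=Q^{J_t}\circ T_t$, hence $T_tF_0=(v_t\circ a_0\circ v_t^{-1})|_{y=0}$. Differentiating the conjugation with $\ddt v_t=\invnu h_t\circ v_t$ (so $\ddt v_t^{-1}=-\invnu v_t^{-1}\circ h_t$) yields
\begin{equation*}
\ddt\left(v_t\circ a_0\circ v_t^{-1}\right)=\invnu\left[h_t,\,v_t\circ a_0\circ v_t^{-1}\right]=\invnu\left[h_t,\,Q^{J_t}(T_tF_0)\right].
\end{equation*}
Evaluating at $y=0$ (which commutes with $\ddt$) and using $h_t=-\alpha_{J_t}(A_t)$ together with Definition~\ref{def:formalconn}, I obtain
\begin{equation*}
\ddt\left(T_tF_0\right)=-\invnu\left[\alpha_{J_t}(A_t),Q^{J_t}(T_tF_0)\right]\big|_{y=0}=-\beta_{J_t}(A_t)(T_tF_0).
\end{equation*}
Since $v_0=1$ gives $T_0=\mathrm{id}$, this is exactly the parallel-transport equation $\ddt s+\beta_{J_t}(A_t)(s)=0$ for $\D$, proving the Moreover assertion.

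For compatibility, recall from \eqref{eq:equivparall} that each $T_t$ is an isomorphism of star-product algebras, $T_t(F*_J G)=T_t(F)*_{J_t}T_t(G)$. Differentiating this identity at $t=0$, using $\ddto T_t=-\beta_J(A)$ (the $t=0$ case of the transport equation, with $T_0=\mathrm{id}$) on the left and the product rule in the three $t$-dependent factors on the right, gives
\begin{equation*}
-\beta_J(A)(F*_J G)=-\beta_J(A)(F)*_J G-F*_J\beta_J(A)(G)+\ddto(F*_{J_t}G).
\end{equation*}
Rearranging reproduces the compatibility relation for the $\beta$-part; for sections $F,G$ that themselves vary with $J$ the remaining $d^{\J}$-terms match by bilinearity of $*_J$, so the identity upgrades to the operator form $\D(F*_J G)=\D(F)*_J G+F*_J\D(G)$.

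The main obstacle is the transport computation: one must correctly use the intertwining $B_t\circ D^{J_0}=D^{J_t}\circ B_t$ to rewrite $v_t\circ a_0\circ v_t^{-1}$ as $Q^{J_t}(T_tF_0)$, and verify that $\ddt$ commutes both with $|_{y=0}$ and with the fiberwise $\circ$-product on the extended bundle $\W^+$ where $v_t$ lives, so that every expression stays in the correct filtered completion despite the negative powers of $\nu$. Once the sign match $h_t=-\alpha_{J_t}(A_t)$ is secured, both assertions are short; the delicate point is purely the bookkeeping of the smooth, polynomial $t$-dependence of the Fedosov data, already guaranteed by the smoothness statement following Theorem~\ref{theor:smoothisom}.
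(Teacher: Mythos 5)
Your argument is correct, and it is worth noting that the paper itself gives no argument at this point: it delegates compatibility to \cite{AMS} and the parallel-transport identification to the analogous statement in \cite{LLFformalDonaldson}. Your transport computation is the one the paper points to: the identity $B_t\circ Q^{J_0}=Q^{J_t}\circ T_t$ on flat sections, the sign match $h_t=-\alpha_{J_t}(A_t)$ (which is immediate from comparing the two displayed formulas), and the conjugation ODE turning Fedosov's equation for $v_t$ into $\ddt(T_tF_0)+\beta_{J_t}(A_t)(T_tF_0)=0$; the only step you leave implicit is the uniqueness of solutions of this linear ODE (which holds order by order in $\nu$ since each order is a finite-dimensional linear problem), needed to conclude that $T_t$ \emph{is} the parallel transport rather than merely a family of parallel sections. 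Where you genuinely diverge from the paper's intended route is compatibility: \cite{AMS} verifies the derivation property of $\beta$ directly from the algebraic structure of $\invnu[\alpha_J(A),\cdot]$ and the intertwining of $Q^J$ with $D^J$, whereas you deduce it by differentiating the isomorphism identity $T_t(F*_{J}G)=T_t(F)*_{J_t}T_t(G)$ at $t=0$ and rearranging. This is legitimate and arguably more economical given Theorem \ref{theor:smoothisom}, since compatibility then comes for free from the transport equation; its only cost is that you must know $t\mapsto F*_{J_t}G$ is differentiable so that $\ddto(F*_{J_t}G)$ exists, which is secured by the polynomial dependence of the Fedosov data on $J_t$ recorded after Theorem \ref{theor:smoothisom}. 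Your degree count showing $\beta=\sum_{k\geq 1}\nu^k\beta_k$ (from $\alpha_J(A)\in\Gamma\W^3$ and the fact that $\invnu[\cdot,\cdot]$ lowers total degree by two before restriction to $y=0$) is also correct.
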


\noindent The compatibility of $\D$ is proved in \cite{AMS} and the link with parallel transport can be proved similarly to the corresponding statement in \cite{LLFformalDonaldson}.

\subsection{The curvature of $\D$}

The curvature of $\D$ evaluated at vector fields $X,Y$ on $\J$ acting on a section $F$ of $\V$ is:
$$\left(\RR(Y,Z)F\right)(J):=\left(\D_{Y}(\D_{Z} F) - \D_{Z}(\D_{Y} F) - \D_{[Y,Z]} F\right)(J),$$

To properly compute the curvature tensor on vectors $A,B\in T_J \J(M,\omega)$, we use extensions of $A$ and $B$ as vector fields on $\J(M,\omega)$.

For any $a\in \End(TM,\omega)$, one defines a vector field $\hat{a}$ by
\begin{equation*} \label{eq:ahat}
\hat{a}_{\widetilde{J}}:= \ddto \exp(ta)\circ \widetilde{J} \circ  \exp(-ta) \in T_{\widetilde{J}} \J(M,\omega).
\end{equation*}
In such a way, for $a:=\frac{1}{2} JA$ with $A\in T_J\J(M,\omega)$ , one has an extension of $A$ as $\hat{a}_J=A$.

One also computes the Lie bracket of two such vector fields obtained from $a,b\in \End(TM,\omega)$:
\begin{equation*}
[\hat{a},\hat{b}]_{\widetilde{J}}:=-\widehat{[a,b]}_{\widetilde{J}}
\end{equation*}
In the particular case of $a:=\frac{1}{2} JA$ and $b:=\frac{1}{2} JB$ with $A,B\in T_J\J(M,\omega)$, the above Lie bracket evaluated at $J$ vanishes:
\begin{equation*}
[\hat{a},\hat{b}]_{J}=0.
\end{equation*}

Hence, we have the following formula for the curvature on $A,B\in T_J\J(M,\omega)$ acting on a section $F$ of $\J$:
$$(\RR(A,B)F)(J):=\left(\D_{\hat{a}}(\D_{\hat{b}} F) - \D_{\hat{b}}(\D_{\hat{a}} F)\right)(J),$$
using the natural extensions $\hat{a},\hat{b}$ defined above with $a=\frac{1}{2} JA$ and $b:=\frac{1}{2} JB$.

\begin{theorem} \label{theor:Ralpha}
For $A,B\in T_J\J(M,\omega)$ and a section $F$ of $\V$, the curvature of $\D$ is given by
\begin{equation}\label{eq:Rdef}
(\RR(A,B)F)(J)=\left.\invnu[\Rr_J(A,B),Q^J(F(J))]\right|_{y=0}
\end{equation}
for $\Rr_J(A,B)$ being the $2$-form with values in $\Gamma\W$ defined by
\begin{equation}\label{eq:Ralpha}
\Rr_J(A,B):=\frac{\nu}{4}\mathrm{Tr }(JAB)  +  d^{\J}\alpha_{J}(A,B)+\invnu[\alpha_{J}(A),\alpha_{J}(B)],
\end{equation}
Moreover, 
\begin{itemize}
\item $\Rr_{J}(A,B)\in \Gamma \W_{D^{J}}$,
\item $\left.\Rr_{J}(A,B)\right|_{y=0}=\frac{\nu}{4}\mathrm{Tr}(JAB)+ O(\nu^2)$.
\end{itemize}
\end{theorem}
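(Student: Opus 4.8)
The plan is to transport the whole computation to the Weyl bundle, where the formal connection becomes a genuinely \emph{inner} connection. For $a=\frac12 JA$ I introduce on $\Gamma\W$ the operator
\[
\widetilde{\D}_{\hat a}s:=\hat a(s)+\invnu[\alpha_J(A),s],
\]
where $\hat a(s)$ is the ordinary directional derivative of the ($J$-dependent) section $s$. First I would check, using Definition \ref{def:formalconn} together with $\sigma\circ Q^J=\mathrm{id}$, that for the lifted section $\widetilde F:=Q^{\bullet}F$, $\widetilde F(J):=Q^J(F(J))$, one has $Q^J(\D_{\hat a}F)=\widetilde{\D}_{\hat a}\widetilde F$: indeed $\sigma(\hat a(\widetilde F))=\hat a(F)$ because $Q^J(F)|_{y=0}=F$ for every $J$, while $\invnu[\alpha_J(A),Q^JF]|_{y=0}=\beta_J(A)(F)$ by definition. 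Hence $\RR$ is the $\sigma$-image of the curvature $\widetilde{\RR}$ of $\widetilde{\D}$, and it suffices to work on $\Gamma\W$. I would also record that $\widetilde{\D}$ preserves the flat sections $\Gamma\W_{D^J}$: writing $\mu_J(A):=\ddto\overline{\Gamma}^{J_t}+\ddto r^{J_t}+\frac{\nu}{2}\delta^{J}A^{b_J}$, one has $D^J\alpha_J(A)=\mu_J(A)$ by Lemma \ref{lemme:Dinverse}, whereas the variation of the Fedosov differential is $\hat a(D^J)=\invnu[\mu_J(A)-\frac\nu2\delta^J A^{b_J},\,\cdot\,]$; since the discrepancy $\frac\nu2\delta^J A^{b_J}$ is a $\nu$-multiple of a function, hence central, a short computation gives $D^J(\widetilde{\D}_{\hat a}s)=0$ whenever $s$ is fibrewise flat.

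Next I would compute $\widetilde{\RR}$ algebraically. Using the extensions $\hat a,\hat b$ with $[\hat a,\hat b]_J=0$ from the excerpt, I expand $\widetilde{\D}_{\hat a}\widetilde{\D}_{\hat b}-\widetilde{\D}_{\hat b}\widetilde{\D}_{\hat a}$: the terms mixing one $\alpha$ with one derivative of $s$ cancel pairwise, the $\hat a$-derivatives of the $\alpha$'s assemble into $d^{\J}\alpha_J(A,B)=\hat a(\alpha_J(B))-\hat b(\alpha_J(A))$, and the two $\frac1{\nu^2}$-terms combine by the Jacobi identity into $\invnu[\invnu[\alpha_J(A),\alpha_J(B)],\,\cdot\,]$. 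This gives $\widetilde{\RR}(A,B)s=\invnu[\Theta,s]$ with $\Theta:=d^{\J}\alpha_J(A,B)+\invnu[\alpha_J(A),\alpha_J(B)]$. As $\frac\nu4\mathrm{Tr}(JAB)$ is a $\nu$-multiple of a function, hence central, it may be inserted inside the bracket for free, so $\widetilde{\RR}(A,B)s=\invnu[\Rr_J(A,B),s]$ with $\Rr_J(A,B)$ as in \eqref{eq:Ralpha}. Applying $\sigma$ to $s=Q^J(F(J))$ yields exactly \eqref{eq:Rdef}.

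The second bullet is then a degree count. Since $\alpha_J(A)\in\Gamma\W^3$, $d^{\J}$ preserves the Weyl filtration while $\invnu[\,\cdot\,,\,\cdot\,]$ lowers total degree by $2$, so $d^{\J}\alpha_J(A,B)$ and $\invnu[\alpha_J(A),\alpha_J(B)]$ have total degree at least $3$ and $4$ respectively. Restriction to $y=0$ retains only $y$-free monomials, which have even total degree; their degree is therefore at least $4$, so both contribute only from $\nu^2$ onward. This gives $\Rr_J(A,B)|_{y=0}=\frac\nu4\mathrm{Tr}(JAB)+O(\nu^2)$.

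The first bullet, flatness of $\Rr_J(A,B)$, is where the real work lies, and I expect it to be the main obstacle. Because $\widetilde{\D}$ preserves $\Gamma\W_{D^J}$, so does its curvature; hence $\invnu[\Theta,s]$ is flat for every flat $s$, and by the graded Leibniz rule $\invnu[D^J\Theta,s]=0$ for all such $s$. By the standard Fedosov fact that an element of $\Gamma\W\otimes\Lambda^1M$ whose bracket annihilates every flat section is central, $D^J\Theta$ is a scalar $1$-form, and flatness of $\Rr_J(A,B)$ is \emph{equivalent} to the identity $D^J\Theta=-\frac\nu4\,d\,\mathrm{Tr}(JAB)$. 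To evaluate $D^J\Theta$ I would commute $D^J$ through the construction of $\Theta$: the variation $\hat a(D^J)=\invnu[\mu_J(A)-\frac\nu2\delta^J A^{b_J},\,\cdot\,]$ produced when $D^J$ is pushed past $d^{\J}$ cancels precisely against the $D^J$-derivatives of the quadratic term $\invnu[\alpha_J(A),\alpha_J(B)]$, leaving $D^J\Theta=d^{\J}\mu_J(A,B)$. Splitting $\mu_J(A)=\hat a(\overline{\Gamma}^J+r^J)+\frac\nu2\delta^J A^{b_J}$, the $(\overline{\Gamma}^J+r^J)$-part is the $\hat a$-derivative of an honest section over $\J(M,\omega)$, so its contribution to $d^{\J}\mu_J(A,B)$ vanishes at $J$ (equality of mixed derivatives, using $[\hat a,\hat b]_J=0$), and one is left with the central $1$-form
\[
D^J\Theta=\frac{\nu}{2}\Big(\hat a\big(\delta^{\bullet}(\hat b)^{b_\bullet}\big)-\hat b\big(\delta^{\bullet}(\hat a)^{b_\bullet}\big)\Big)\Big|_{J},
\]
where $\delta^{\bullet}(\hat c)^{b_\bullet}$ denotes the $J$-dependent $1$-form obtained by applying $\delta^{J}$ to the symmetric $2$-tensor $\omega(\cdot,\hat c\,\cdot)$ attached to the field $\hat c$. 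Differentiating Corollary \ref{cor:variationHermitian} shows at once that this $1$-form is closed (the mixed $\hat a\hat b$-derivatives of $\rho$ cancel), a reassuring consistency check; pinning down the exact primitive $-\frac\nu4\,d\,\mathrm{Tr}(JAB)$, however, requires expanding the $\J$-variations of $\delta^{\bullet}$, of the musical isomorphism $b_{\bullet}$, and of the vector field $\hat b$ along $\hat a$ by means of Lemma \ref{lemme:firstvarnabla}. This bookkeeping is the one genuinely laborious step of the argument.
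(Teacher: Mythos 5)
Your overall architecture is the same as the paper's: the $\alpha$-terms of $\Rr_J(A,B)$ come from the standard curvature computation (and the central scalar term is inserted for free since it commutes), the second bullet is the parity/degree count on $\alpha_J(\cdot)\in\Gamma\W^3$, and the whole content of the first bullet is reduced to evaluating $D^J$ on $d^{\J}\alpha_J(A,B)+\invnu[\alpha_J(A),\alpha_J(B)]$, where after the cancellation of all $\overline{\Gamma}^J$, $r^J$ and $\alpha_J$ contributions one is left with $\frac{\nu}{2}\bigl(\hata(\delta^{\bullet}\hatb^{b_\bullet})-\hatb(\delta^{\bullet}\hata^{b_\bullet})\bigr)\big|_J$. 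All of this matches the paper, and your preliminary checks (that $Q^J$ intertwines $\D$ with the inner connection on $\Gamma\W$, that the discrepancy $\frac{\nu}{2}\delta^JA^{b_J}$ is central, that $[\hata,\hatb]_J=0$ kills the mixed second derivatives of $\overline{\Gamma}^\bullet+r^\bullet$) are correct.

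The genuine gap is that you stop exactly where the theorem's first bullet actually lives. The identity
\[
\hata\bigl(\delta^{\bullet}\hatb^{\,b_\bullet}\bigr)\big|_J-\hatb\bigl(\delta^{\bullet}\hata^{\,b_\bullet}\bigr)\big|_J=-\tfrac{1}{2}\,d\bigl(\mathrm{Tr}(JAB)\bigr)
\]
is the paper's Lemma \ref{lemme:delta}, proved in the appendix, and it is not ``bookkeeping'' that can be waved at: it is the only place where the specific coefficient $\frac{\nu}{4}$ in front of $\mathrm{Tr}(JAB)$ — and hence the statement that $\Rr_J(A,B)$ is $D^J$-flat at all, and downstream the fact that $\Omtilde$ deforms $\Om$ with the right normalisation — gets pinned down. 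The closedness you extract by differentiating Corollary \ref{cor:variationHermitian} is strictly weaker: a closed scalar $1$-form need not be exact when $H^1(M)\neq 0$, and even when it is exact you have not identified the primitive, so you cannot conclude that \emph{this} $\Rr_J(A,B)$ (as opposed to one corrected by an unknown closed $1$-form) is flat. To close the gap you would do what the appendix does: pair the $1$-form against $g_J(Y,\cdot)$ for arbitrary $Y$ in $L^2$, push the $t$-derivative through the integral, apply Lemma \ref{lemme:firstvarnabla} to the variation of $\nabla^{g_{\widetilde J_{0t}}}$, and observe that the resulting expression splits into a piece symmetric in $A,B$ (which cancels in the antisymmetrised difference) plus exactly $-\frac{1}{2}\bigl(d\,\mathrm{Tr}(JAB),g_J(Y,\cdot)\bigr)_J$. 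Without that computation the proof of the first bullet, and hence of the theorem, is incomplete.
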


\begin{proof}
The terms containing $\alpha$ in Equation \eqref{eq:Ralpha} come from standard computations of $\RR$. The term in $\nu$ from Equation \eqref{eq:Ralpha} doesn't contribute in Equation \eqref{eq:Rdef} but will make $\Rr_J(A,B)$ a flat section.

To check  $\Rr_{J}(A,B)\in \Gamma \W_{D^{J}}$, we compute $D^{J}$ applied to the RHS of \eqref{eq:Ralpha}. First, because $\frac{\nu}{4}\mathrm{Tr }(JAB)$ is a function on $M$, 
$$D^{J}  \mathrm{Tr }(JAB)=d(\mathrm{Tr }(JAB)).$$
Now, we detail the terms of $D^{J}\left(d^{\J}\alpha_{J}(A,B)+\invnu[\alpha_{J}(A),\alpha_{J}(B)]\right)$. To do that we use the extensions $\hat{a}$ and $\hat{b}$ defined earlier for $a=\frac{1}{2} JA$ and $b:=\frac{1}{2} JB$ and we start with $D^J(\hata(\alpha(\hatb)))$. Consider the 2-parameter family of almost complex structures
$$J_{st}:= \exp(sb)\exp(ta)J\exp(-ta)\exp(-sb),$$
so that 
$$\ddso J_{st}=[b,\exp(ta)J\exp(-ta)].$$
We compute
\begin{eqnarray}
D^J(\hata(\alpha(\hatb))) & = & D^J(\ddto \alpha_{J_{0t}}(\ddso J_{st})), \\
 & = & \ddto D^{J_{0t}} \alpha_{J_{0t}}(\ddso J_{st}) - \left(\ddto D^{J_{0t}}\right)(\alpha_{J}(\ddso J_{s0})). 
\end{eqnarray}
Similarly, for $D^J(\hatb(\alpha(\hata)))$, consider the 2-parameter family of almost complex structures
$$\widetilde{J}_{st}:= \exp(sa)\exp(tb)J\exp(-tb)\exp(-sa),$$
so that 
$$\ddso \widetilde{J}_{st}=[a,\exp(tb)J\exp(-tb)].$$
Then,
\begin{eqnarray}
D^J(\hatb(\alpha(\hata))) & = & D^J(\ddto \alpha_{\widetilde{J}_{0t}}(\ddso \widetilde{J}_{st})), \\
 & = & \ddto D^{\widetilde{J}_{0t}} \alpha_{\widetilde{J}_{0t}}(\ddso \widetilde{J}_{st}) - \left(\ddto D^{\widetilde{J}_{0t}}\right)(\alpha_{\widetilde{J}}(\ddso \widetilde{J}_{s0})).
\end{eqnarray}
We have no contribution from $[\hata,\hatb]_J$ at the point $J$.

It follows from standard computation, as in \cite{LLFlast,LLFformalDonaldson}, that in 
$$D^J\left(d^{\J}\alpha_{J}(\hata,\hatb)+\invnu[\alpha_{J}(\hata_J),\alpha_{J}(\hatb_J)]\right)$$
all the terms involving $\overline{\Gamma}^J, r^J$ and $\alpha_J$ cancel with each other. So that all it remains is
\begin{equation}
D^J\left(d^{\J}\alpha_{J}(\hata,\hatb)+\invnu[\alpha_{J}(\hata_J),\alpha_{J}(\hatb_J)]\right)= \frac{\nu}{2}\left( \ddto\left[\delta^{J_{0t}}(B)\right]^{b_{J_{0t}}}- \ddto\left[\delta^{\widetilde{J}_{0t}}(A)\right]^{b_{\widetilde{J}_{0t}}}\right),
\end{equation}
and again we have no contribution from $[\hata,\hatb]_J$ at point $J$.

Using Lemma \ref{lemme:delta}, we get
\begin{eqnarray}
D^J\left(d^{\J}\alpha_{J}(\hata,\hatb)+\invnu[\alpha_{J}(\hata_J),\alpha_{J}(\hatb_J)]\right) & = &  -\frac{\nu}{4} d(\mathrm{Tr}(JAB)),\nonumber
\end{eqnarray}
which shows that $R_J(A,B)$ is a $D^J$-flat section.

\noindent Finally, because $\alpha(\cdot)$ is of degree at least 3, we have
$$\left.\Rr_{J}(A,B)\right|_{y=0}= \frac{\nu}{4} \mathrm{Tr}(JAB) + O(\nu^2).$$
\end{proof}

\begin{lemma} \label{lemme:delta}
For $A,B\in T_J\J(M,\omega)$ and $J_{ts},\widetilde{J}_{ts}$ the $2$-parameters families defined in the proof of the above Theorem \ref{theor:Ralpha}, we have
$$\ddto\left[\delta^{J_{0t}}(B)\right]^{b_{J_{0t}}}- \ddto\left[\delta^{\widetilde{J}_{0t}}(A)\right]^{b_{\widetilde{J}_{0t}}}=-\frac{1}{2} d(\mathrm{Tr}(JAB)).$$
\end{lemma}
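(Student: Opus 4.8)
The plan is to strip the statement down to a single antisymmetrised first-order variation and then to pin the primitive $-\tfrac12\mathrm{Tr}(JAB)$ by a local computation. \emph{Reduction.} Along $J_{0t}$ the symbol $B$ really denotes the value $\hat b_{J_{0t}}=[b,J_{0t}]$ of the extending vector field, and $(\delta^J C)^{b_J}$ is the $g_J$-lowering of the divergence vector field $\delta^J C=-\sum_i(\nabla^{g_J}_{e_i}C)e_i$ of an endomorphism $C$. Applying Leibniz to each $\ddto$, I separate the derivative where only $(g_J,\nabla^{g_J})$ varies (argument frozen at $B$, resp. $A$) from the derivative of the argument itself. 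For the latter,
$$\ddto\hat b_{J_{0t}}=[b,A]=\tfrac12 J(AB+BA)=[a,B]=\ddto\hat a_{\widetilde J_{0t}},$$
where I used $a=\tfrac12 JA$, $b=\tfrac12 JB$ and $AJ=-JA$, $BJ=-JB$. Because $C\mapsto(\delta^J C)^{b_J}$ is linear, these two argument contributions coincide and cancel in the antisymmetric difference. Writing $\Theta(A,B)$ for the derivative in the direction $A$ of the $1$-form $J'\mapsto(\delta^{J'}B)^{b_{J'}}$ (with $B$ frozen), the Lemma becomes
$$\Theta(A,B)-\Theta(B,A)=-\tfrac12\,d\,\mathrm{Tr}(JAB).$$

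\emph{A closedness check.} Before computing I record a structural constraint. By Corollary \ref{cor:variationHermitian}, for a path of velocity $C$ one has $\ddto\rho=-\tfrac12 d[(\delta^J C)^{b_J}]$, and the exterior derivative on $M$ commutes with differentiation in $J$. Hence the left-hand side $\eta:=\Theta(A,B)-\Theta(B,A)$ satisfies
$$d\eta=-2\big(\hat a(\hat b\,\rho^J)-\hat b(\hat a\,\rho^J)\big)=-2\,[\hat a,\hat b]\,\rho^J=0,$$
the last equality holding at $J$ because $[\hat a,\hat b]_J=0$. Thus $\eta$ is automatically closed, in agreement with the exactness of the claimed right-hand side; no non-closed term may survive the computation.

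\emph{The local computation.} To find the primitive I work at an arbitrary $x_0\in M$ in a $g_J$-orthonormal frame geodesic at $x_0$, so that $\nabla^{g_J}e_i=0$ and the Christoffel symbols vanish at $x_0$ while $\dot\Gamma$ and $\nabla^{g_J}\dot g$ are retained. From $(\delta^J B)^{b_J}_l=-g_{lk}g^{ij}\nabla^{g_J}_i B_j{}^k$ the variation $\Theta(A,B)$ has exactly three sources: the lowering metric $g_{lk}$, the trace metric $g^{ij}$, and the connection through $\dot\Gamma$, for which I use Lemma \ref{lemme:firstvarnabla} with $\ddto g_{J_t}=\omega(\cdot,A\cdot)$. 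I then exploit that $A,B$ are $g_J$-self-adjoint and anticommute with $J$, whence $\mathrm{Tr}(JA)=\mathrm{Tr}(JB)=0$ and the metric variations are $g_J$-trace-free, together with $\nabla^{g_J}_k\big(\omega(\cdot,A\cdot)\big)=(\nabla^{g_J}_k\omega)(\cdot,A\cdot)+\omega(\cdot,(\nabla^{g_J}_kA)\cdot)$ and $\nabla^{g_J}\omega=g_J\big((\nabla^{g_J}J)\cdot,\cdot\big)$. Antisymmetrising in $A\leftrightarrow B$, the genuinely tensorial terms cancel in pairs and the remainder regroups as
$$-\tfrac12\,\nabla^{g_J}_l\,\mathrm{Tr}(JAB)=-\tfrac12\Big(\mathrm{Tr}\big((\nabla^{g_J}_lJ)AB\big)+\mathrm{Tr}\big(J(\nabla^{g_J}_lA)B\big)+\mathrm{Tr}\big(JA(\nabla^{g_J}_lB)\big)\Big),$$
which is $-\tfrac12 d\,\mathrm{Tr}(JAB)$.

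\emph{Main obstacle.} The crux is the bookkeeping of this antisymmetrisation. One must carry the non-K\"ahler contributions $\nabla^{g_J}J\neq 0$, which enter both $\dot\Gamma$ (through $\nabla^{g_J}\omega$) and the term $\mathrm{Tr}((\nabla^{g_J}_lJ)AB)$ of the answer, and verify that everything not of gradient type cancels. The closedness above is a useful guard rail: any surviving non-exact term --- in particular any curvature $R^{g_J}$ produced by commuting covariant derivatives, which cannot appear in a first-order right-hand side --- must drop out. Naturality also pins the shape of the answer: a scalar built equivariantly from $J,A,B$, linear in each of $A,B$ and antisymmetric, is forced to be a universal multiple of $\mathrm{Tr}(JAB)$ (all other words in $J,A,B$ having vanishing or symmetric trace), with the constant $-\tfrac12$ fixed by the computation.
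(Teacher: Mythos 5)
Your reduction (identifying the argument of $\delta^{J_{0t}}$ as the $t$-dependent value $\hat b_{J_{0t}}=[b,J_{0t}]$, checking $[b,A]=[a,B]=\tfrac12 J(AB+BA)$ so that the argument-variation terms cancel in the antisymmetric difference) and your closedness guard rail are both correct, and the reduction is a clarification the paper leaves implicit. The problem is that after these preliminaries the decisive step --- the antisymmetrised variation of $(g_{lk},g^{ij},\Gamma)$ in normal coordinates regrouping into $-\tfrac12\nabla^{g_J}_l\mathrm{Tr}(JAB)$ --- is only announced, not performed. That bookkeeping \emph{is} the content of the lemma; as written, the proposal is a plan for a proof rather than a proof.

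The fallback you offer does not close this gap. The quantity $\Theta(A,B)-\Theta(B,A)$ is a $1$-form that is \emph{first order} in covariant derivatives of $J$, $A$, $B$, so the relevant space of natural, bilinear, antisymmetric candidates is not spanned by $d\,\mathrm{Tr}(JAB)$ alone: for instance $\mathrm{Tr}\bigl((\nabla^{g_J}_\cdot A)B\bigr)-\mathrm{Tr}\bigl((\nabla^{g_J}_\cdot B)A\bigr)$, terms built from $\nabla^{g_J}J$ contracted differently than in $d\,\mathrm{Tr}(JAB)$, and $1$-forms of the type $\bigl(\delta^J(AB-BA)\bigr)^{b_J}$ are all equivariant, bilinear and antisymmetric, and closedness does not eliminate them a priori. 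So neither the shape of the answer nor the constant $-\tfrac12$ is forced without the computation. If you want to avoid the pointwise normal-coordinate bookkeeping, the paper's own device is worth adopting: pair $\ddto\bigl[\delta^{\widetilde J_{0t}}(A)\bigr]^{b_{\widetilde J_{0t}}}$ in $L^2$ against an arbitrary test form $g_J(Y,\cdot)$, observe that $(g_{\widetilde J_{0t}})^{kl}g_{\widetilde J_{0t}}(Y,e_l)=Y^k$ is $t$-independent so the derivative passes onto the whole integral, integrate by parts to trade $\delta^{\widetilde J_{0t}}$ for $\nabla^{g_{\widetilde J_{0t}}}Y$, and then apply Lemma \ref{lemme:firstvarnabla} once; the only term not manifestly symmetric in $A\leftrightarrow B$ is $\tfrac12\bigl(g(\cdot,A\cdot),\nabla^{g_J}_Y g(\cdot,JB\cdot)\bigr)_J$, whose antisymmetrisation is exactly $-\tfrac12\bigl(d\,\mathrm{Tr}(JAB),g(Y,\cdot)\bigr)_J$.
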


\noindent The proof is postponed to the Appendix.

\section{Formal symplectic form and formal moment map}

\subsection{A formal symplectic form on $\J(M,\omega)$}

A \emph{formal symplectic form} on a manifold $F$ is a formal deformation of a symplectic form $\sigma_0$ of the form:
$$\sigma:=\sigma_0+\nu\sigma_1+\ldots \in \Omega^2(F)[[\nu]],$$
with closed $2$-forms $\sigma_i$ for all $i$.

As in our previous works \cite{LLFlast,LLFformalDonaldson}, the $*$-product trace and the curvature element $\Rr$ will produce our formal moment map picture.

Consider a star product $*$ on a symplectic manifold, a \emph{trace} for $*$ on a symplectic manifold $(M,\omega)$ is a character
$$ \tr : (C_c^\infty(M)[[\nu]],[\cdot,\cdot]_\ast) \to \R[\nu^{-1},\nu]].$$
A trace always exits for a given star product on $(M,\omega)$. It is unique if one asks for the normalisation condition
\begin{equation*}\label{normalized_tr}
\tr(F) = \frac1{(2\pi\nu)^m} \int_M BF\ \frac{\omega^m}{m!}, \textrm{ for all } F\in C_c^\infty(U)[[\nu]].
\end{equation*}
for all $U$ contractible Darboux chart and $B$ being local equivalences of $*|_{C^\infty(U)[[\nu]]}$ with the Moyal star product $\ast_{\mathrm{Moyal}}$.
The trace is given by the $L^2$-product with a formal function $\rho\in C^{\infty}(M)[\nu^{-1},\nu]]$, called the trace density
$$\tr(F) =\frac{1}{(2\pi\nu)^m} \int_M F\rho\ \frac{\omega^m}{m!}.$$

\noindent For $J\in \J(M,\omega)$, we denote by \emph{$\tr^{*_{J}}$ the normalised trace} of the Fedosov star product $*_{J}$ and by $\rho^{J}$ its \emph{trace density}.

\begin{defi} \label{def:Omtilde}
Let $\Omtilde$ be the formal $2$-form on $\J(M,\omega)$ defined by
$$\Omtilde_J(A,B):=4(2\pi)^m\nu^{m-1} \tr^{*_{J}}(\left.\Rr_J(A,B)\right|_{y=0}),$$
for $J\in \J(M,\omega)$ and $A,B\in T_J\J(M,\omega)$.
\end{defi}

\begin{theorem}\label{theor:omegatilde}
$\Omtilde$ is a formal symplectic form on $\J(M,\omega)$ deforming $\Om$ and invariant under the action of $\ham(M,\omega)$ on $\J(M,\omega)$.
\end{theorem}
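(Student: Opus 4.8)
The plan is to verify the three assertions of the theorem separately: that $\Omtilde$ is closed, that it deforms $\Om$, and that it is $\ham(M,\omega)$-invariant.

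First I would establish that $\Omtilde$ is a \emph{formal} deformation starting at the right order. By Theorem \ref{theor:Ralpha} we know $\left.\Rr_J(A,B)\right|_{y=0}=\frac{\nu}{4}\mathrm{Tr}(JAB)+O(\nu^2)$. Feeding this into the normalised trace and recalling that $\tr^{*_J}(F)=\frac{1}{(2\pi\nu)^m}\int_M F\rho^J\,\dvol$ with trace density $\rho^J=1+O(\nu)$, the leading term of $\tr^{*_J}(\left.\Rr_J(A,B)\right|_{y=0})$ is $\frac{1}{(2\pi\nu)^m}\cdot\frac{\nu}{4}\int_M \mathrm{Tr}(JAB)\,\dvol$. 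Multiplying by the normalisation prefactor $4(2\pi)^m\nu^{m-1}$ cancels the $(2\pi)^m$ and the powers of $\nu$ exactly to give $\int_M\mathrm{Tr}(JAB)\,\dvol=\Om_J(A,B)$ at order $\nu^0$, so $\Omtilde=\Om+\nu(\cdots)$ is a genuine formal deformation of $\Om$. This step is mostly bookkeeping of the $\nu$-powers and the prefactor.

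Next, for closedness, the natural route is to use that the trace is a closed functional and that $\Rr_J$ is the curvature of the formal connection $\D$. The key input is the flatness $\Rr_J(A,B)\in\Gamma\W_{D^J}$ from Theorem \ref{theor:Ralpha}: since $\Rr$ is built as a curvature $2$-form and the trace is $\D$-parallel (the trace of a star product is preserved under the equivalences of Theorem \ref{theor:smoothisom}, hence is covariantly constant for $\D$), the Bianchi identity $\D\Rr=0$ together with $\D$-invariance of $\tr^{*_J}$ forces $d^{\J}\Omtilde=0$. Concretely I would differentiate $\Omtilde$ along three extended vector fields $\hat a,\hat b,\hat c$ of the type defined in the curvature section, use $d^{\J}\tr^{*_{J_t}}(\cdots)=\tr^{*_{J_t}}(\D(\cdots))$ coming from compatibility of $\D$ with $\{*_J\}$, and invoke the second Bianchi identity for $\Rr$. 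The fact that $\tr^{*_J}(\left.[\alpha,Q^J(F)]\right|_{y=0})$ contributes nothing—because the trace annihilates star-commutators—is what makes the parallel-transport terms drop out and leaves only the Bianchi expression.

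Finally, for $\ham(M,\omega)$-invariance, I would exploit the equivariance results already collected: Proposition \ref{prop:nablaJ} and the Chern-connection equivariance give $\nabla^{\varphi\cdot J}=\varphi\cdot\nabla^J$ and $\rho^{\varphi\cdot J}=\varphi^*\rho^J$ (Lemma \ref{lemme:equivHRicci}), so the whole Fedosov package $(\overline{\Gamma}^J,r^J,D^J,Q^J)$ and hence $\Rr_J(A,B)$ transform naturally under $\varphi$. Since $\varphi$ is symplectic it preserves $\dvol$ and hence the normalised trace, so a change of variables $x\mapsto\varphi(x)$ in the integral defining $\Omtilde$ yields $\varphi^*\Omtilde=\Omtilde$. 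I expect the \textbf{closedness} to be the main obstacle: unlike deformation and invariance, which follow from direct substitution and naturality, closedness requires carefully organising the second Bianchi identity for the curvature of a formal connection together with the covariant-constancy of the trace, and checking that all the $\alpha$-dependent correction terms assemble into $\D$-commutators that the trace kills—this is where the analogue computations from \cite{LLFlast,LLFformalDonaldson} must be transported to the present $\J(M,\omega)$ setting.
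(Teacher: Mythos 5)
Your proposal is correct and follows essentially the same route as the paper: the deformation statement is the same $\nu$-power bookkeeping against the leading term $\frac{\nu}{4}\mathrm{Tr}(JAB)$ of $\Rr_J$ from Theorem \ref{theor:Ralpha}, the invariance argument is the same appeal to equivariance of the Fedosov ingredients (Proposition \ref{prop:nablaJ}, Lemma \ref{lemme:equivHRicci}), and your ``Bianchi identity plus covariant constancy of the trace'' argument for closedness is exactly the paper's computation on the extended fields $\hata,\hatb,\hatc$, with the covariant constancy being precisely the content of Lemma \ref{lemme:tracevariation} (cited there from \cite{FutLLF2}, and justified by you via the parallel transport of Theorem \ref{theor:smoothisom} being star-product equivalences). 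You also correctly identify closedness as the only part requiring real work.
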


\begin{proof}
The result follows from direct adaptation of the corresponding results from \cite{LLFlast, LLFformalDonaldson}. 

The fact that $d^{\J}\Omtilde=0$ at all $J\in \J(M,\omega)$ is computed on vector fields of the form $\hata,\hatb$ and $\hatc$ extending tangent elements $A,B$ and $C$ at $J$ and using the following Lemma.

\begin{lemma}[\cite{FutLLF2}] \label{lemme:tracevariation}
Let $t\mapsto J_t$ be a smooth path in $\J(M,\omega)$. Then
$$ \ddto \tr^{*_{J_t}}(F)=\tr^{*_{J_0}}\left(\left.\invnu[\alpha_{J_0}(\ddto J_t),Q^{J_0}(F)]\right|_{y=0}\right). $$
\end{lemma}

The invariance of $\Omtilde$ with respect to the action of $\ham(M,\omega)$, comes from the naturality of Fedosov construction and the equivariance of the ingredients we used: the Hermitian Ricci form, see Lemma \ref{lemme:equivHRicci} and the symplectic connection $\nabla^J$, see Proposition \ref{prop:nablaJ}. 

Finally, to see $\Omtilde$ deforms $\Om$, notice the trace starts with a multiple of the integral, the first order term of $\Rr$ in Theorem \ref{theor:Ralpha} is precisely $\frac{\nu}{4}\mathrm{Tr}(JAB)$ and compare with Equation \eqref{eq:OmJ}.
\end{proof}

\subsection{Deforming the Donaldson-Fujiki picture}

Consider an action $\cdot$ of a regular Lie group $G$ on $(X,\sigma)$  a manifold equipped with a formal symplectic form $\sigma$ so that the action preserves $\sigma$. We define an \emph{equivariant formal moment map} to be a map 
$$\theta:\g \rightarrow C^{\infty}(X)[[\nu]],$$
for $\g$ the Lie algebra of $G$, such that for all $g\in G$, $\mathcal{Y}\in \g$ and $x\in X$
\begin{eqnarray} \label{eq:formalmoment}
\textrm{(formal moment map) } & & \imath\left(\left.\frac{d}{dt}\right|_{t=0}\exp(t\mathcal{Y})\cdot x\right)\sigma  =  d^X \theta(\mathcal{Y}) \\
\textrm{(equivariance) }& & \theta(Ad(g)\mathcal{Y}) = (g^{-1}\cdot)^*\theta(\mathcal{Y}). \nonumber
\end{eqnarray}

\begin{theoremprinc} \label{theor:formalDFpicture} \ 

\begin{enumerate}
\item The map 
$$\mu: C^{\infty}_0(M) \rightarrow C^{\infty}(\J(M,\omega))[[\nu]]:H \mapsto \left[ J\mapsto 4(2\pi)^m\nu^{m-1}\tr^{*_{J}}(H)\right],$$
satisfies the equivariant formal moment map at first order in $\nu$ for the action of $\ham(M,\omega)$ on $(\J(M,\omega),\Omtilde)$ .\\
\item In the K\"ahler case, denote by $\Delta^J H:=-\frac{1}{2}\Lambda^{ks}\left(d(d H\circ J)\right)_{ks}$ the Laplacian for $J\in \J_{int}(M,\omega)$, then the map
$$\widetilde{\mu}: C^{\infty}_0(M) \rightarrow C^{\infty}(\J_{int}(M,\omega))[[\nu]]:H \mapsto \left[ J\mapsto 4(2\pi)^m\nu^{m-1}\tr^{*_{J}}(H-\frac{\nu}{2} \Delta^J H)\right],$$ 
is a formal moment map on $(\J_{int}(M,\omega),\Omega^{\J_{int}})$.
\end{enumerate}

\noindent Moreover, at first order in $\nu$, both maps $\mu$ and $\widetilde{\mu}$ co\"incide with the Donaldson-Fujiki moment map.
\end{theoremprinc}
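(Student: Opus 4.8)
The plan is to verify, for each of $\mu$ and $\widetilde\mu$, the two conditions defining an equivariant formal moment map, and then to read off their leading $\nu$-coefficients for the comparison with Donaldson--Fujiki. I write the Lie algebra of $\ham(M,\omega)$ as $C^\infty_0(M)$, so that the fundamental vector field of $H$ at $J$ is $\underline H_J=\mathcal L_{X_H}J\in T_J\J(M,\omega)$, and I abbreviate $c_m:=4(2\pi)^m\nu^{m-1}$. Equivariance is the cheapest part and I would settle it first, since it uses only naturality: by Lemma \ref{lemme:equivHRicci} and Proposition \ref{prop:nablaJ} the Fedosov data $(\nabla^{\varphi\cdot J},\nu\rho^{\varphi\cdot J})$ is the $\varphi$-transform of $(\nabla^J,\nu\rho^J)$, so $\varphi^*$ is a star-product isomorphism $(*_{\varphi\cdot J})\to(*_J)$; as the normalised trace is pinned down by a local condition involving only the symplectic volume, which every $\varphi\in\ham(M,\omega)$ preserves, one gets $\tr^{*_{\varphi\cdot J}}(H)=\tr^{*_J}(\varphi^*H)$. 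The Laplacian $\Delta^J$ is built naturally from $(g_J,J)$ and transforms the same way. Reading off the definitions of $\mu,\widetilde\mu$ and recalling $\mathrm{Ad}(\varphi)H=(\varphi^{-1})^*H$, this gives exactly the required identity $\theta(\mathrm{Ad}(\varphi)H)=(\varphi^{-1}\cdot)^*\theta(H)$.

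For the moment map equation of part (1) I would compute both sides on $B\in T_J\J(M,\omega)$. Differentiating $\mu(H)$ and invoking the trace-variation Lemma \ref{lemme:tracevariation} gives
$$\big(d^{\J}\mu(H)\big)_J(B)=c_m\,\tr^{*_J}\!\left(\left.\invnu[\alpha_J(B),Q^J(H)]\right|_{y=0}\right),$$
while Definition \ref{def:Omtilde} together with Theorem \ref{theor:Ralpha} gives
$$\big(\imath(\underline H)\Omtilde\big)_J(B)=c_m\,\tr^{*_J}\!\left(\left.\Rr_J(\mathcal L_{X_H}J,B)\right|_{y=0}\right),$$
whose integrand is, by \eqref{eq:Ralpha}, $\tfrac\nu4\mathrm{Tr}(J\,\mathcal L_{X_H}J\,B)+d^{\J}\alpha_J(\mathcal L_{X_H}J,B)+\invnu[\alpha_J(\mathcal L_{X_H}J),\alpha_J(B)]$ restricted to $y=0$. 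Since $\Omtilde$ is $\ham$-invariant (Theorem \ref{theor:omegatilde}), the left-hand $1$-form is already $d^{\J}$-closed, and the task is to exhibit $\mu(H)$ as its primitive. Comparing the two displays collapses to a single identity under the trace: the connection form on the fundamental vector field, $\alpha_J(\mathcal L_{X_H}J)$, must agree with the flat lift $Q^J(H)$ of the Hamiltonian up to $D^J$-exact and central terms that $\tr^{*_J}$ annihilates, so that the $d^{\J}\alpha$ and $\tfrac\nu4\mathrm{Tr}$ contributions combine with the commutator to leave precisely $\invnu[\alpha_J(B),Q^J(H)]$. I expect this to be the main obstacle: one must track how $\ddto(\overline\Gamma^{J_t}+r^{J_t})$ along the orbit $J_t=\exp(tX_H)\cdot J$ is produced, through the naturality of Fedosov's construction, by the geometric lift of $X_H$ to $\W$, and then show that $(D^J)^{-1}$ of it differs from $Q^J(H)$ only by inessential terms. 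This is the quantised counterpart of the assertion that the $\ham$-action is Hamiltonian, and it is precisely the computation carried out in \cite{LLFlast,LLFformalDonaldson}, which I would transcribe to the present setting. Because $\alpha$ has Weyl-degree at least $3$, the identity is exact at leading order, yielding the moment map equation at first order in $\nu$, whereas an obstruction survives beyond it, which is what confines part (1) to first order.

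For part (2) the integrable case is governed by $\nabla^{g_J}J=0$, which collapses the three connections of Section \ref{subsect:connections} into one, $\nabla^J=\nabla^{g_J}=\nablabarJ$, and makes $\rho^J$ the genuine Ricci form. The Fedosov data then simplify enough that $\tr^{*_J}(H)$ is controllable to all orders in $\nu$, and the obstruction left over from part (1) can be read off order by order. The plan is to show that at every order it is the integral against $H$ of a term of Laplacian type, so that inserting the deformed Hamiltonian $H-\tfrac\nu2\Delta^J H$ cancels it and promotes $\imath(\underline H)\Omega^{\J_{int}}=d^{\J}\widetilde\mu(H)$ to an exact identity of formal $1$-forms on $\J_{int}(M,\omega)$. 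The K\"ahler symmetries of the curvature recorded in \eqref{eq:RicciKahler} are what make this cancellation hold to all orders rather than only to first; pinning down the exact coefficient $-\tfrac\nu2$ is the delicate point, and I would fix it by matching the $\nu$-term of the obstruction against $\ddto S^{J_t}$ from Corollary \ref{cor:variationHermitian}.

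Finally, for the agreement with Donaldson--Fujiki I would expand the trace density. Writing $\tr^{*_J}(F)=\frac{1}{(2\pi\nu)^m}\int_M F\,\varrho^J\,\dvol$ with $\varrho^J=1+\nu\,\varrho^J_1+O(\nu^2)$, the prefactor gives $\mu(H)(J)=\tfrac4\nu\int_M H\,\varrho^J\,\dvol$; for $H\in C^\infty_0(M)$ the pole $\tfrac4\nu\int_M H\,\dvol$ vanishes, leaving $\mu(H)(J)=4\int_M H\,\varrho^J_1\,\dvol+O(\nu)$. The one external input is the value of the first trace-density coefficient of the Fedosov star product built from $(\nabla^J,\nu\rho^J)$: the normalisation $4(2\pi)^m$ is calibrated so that $4\varrho^J_1=S^J$ modulo constants, whence $\mu(H)(J)=\int_M S^J H\,\dvol+O(\nu)$, the Donaldson--Fujiki moment map. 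For $\widetilde\mu$ the extra term contributes $-2\int_M\Delta^J H\,\varrho^J\,\dvol$, which is $O(\nu)$ since $\int_M\Delta^J H\,\dvol=0$; hence $\widetilde\mu$ has the same $\nu^0$-part and likewise reduces to the Donaldson--Fujiki moment map at first order.
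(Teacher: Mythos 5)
Your skeleton matches the paper's: equivariance by naturality of the Fedosov data (Proposition \ref{prop:nablaJ}, Lemma \ref{lemme:equivHRicci}), the moment map equation by differentiating the trace via Lemma \ref{lemme:tracevariation} and comparing with $\tr^{*_J}(\Rr_J(\Lr_{X_H}J,\cdot)|_{y=0})$, and the Donaldson--Fujiki comparison by the leading term of the normalised trace. But the step you flag as ``the main obstacle'' and then outsource to \cite{LLFlast,LLFformalDonaldson} is where the actual content of this theorem lives, and your description of it is not quite the right statement. The paper's Lemma \ref{lemme:QHformula} shows that $Q^J(H)$ equals $\alpha_J(\Lr_{X_H}J)$ plus explicit low-degree terms \emph{minus} $\nu(D^J)^{-1}\bigl(\imath(X_H)\rho^J+\tfrac12(\delta^J\Lr_{X_H}J)^{b_J}\bigr)$. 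That last closed $1$-form is the precise obstruction to your ``agree up to terms the trace annihilates'', and it is not annihilated by the trace in general; its exactness in the K\"ahler case, with primitive $d(\tfrac12\Delta^JH)$ (Lemma \ref{lemme:exactLaplacian}), is what dictates the correction $-\tfrac{\nu}{2}\Delta^JH$ in $\widetilde\mu$. Without identifying this $1$-form you cannot derive the coefficient $-\tfrac{\nu}{2}$, and your proposed calibration against $\ddto S^{J_t}$ from Corollary \ref{cor:variationHermitian} is not the mechanism the paper uses.

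The plan for part (2) also would not go through as stated. You propose to control $\tr^{*_J}$ ``to all orders in $\nu$'' in the integrable case and cancel an obstruction order by order; the trace density of a Fedosov star product is not computable to all orders, and the paper's argument is designed precisely to avoid needing it. The all-orders validity comes from a \emph{pointwise} cancellation inside the trace: after inserting $H-\tfrac{\nu}{2}\Delta^JH$, the leftover integrand $-\tfrac{\nu}{4}\mathrm{Tr}(JA\Lr_{X_H}J)+\tfrac{\nu}{2}(\delta^JA)^{b_J}(X_H)-\tfrac{\nu}{2}\ddto\Delta^{J_t}H$ vanishes identically by the K\"ahler identity of Lemma \ref{lemme:ddtoLaplacian}, so the statement holds whatever the higher trace-density coefficients are. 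Relatedly, your explanation of why part (1) is confined to first order (``$\alpha$ has Weyl-degree at least $3$'') is not the paper's reason: in the almost-K\"ahler case the pointwise identity fails and is replaced by the integral identity of Lemma \ref{lemme:formula}, which only controls the leading (integral) term of the trace --- hence first order only. These are genuine gaps, not just omitted routine computations.
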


\noindent We will use the next two Lemmas.

\begin{lemma} \cite{gr3} \label{lemme:Liederiv}
Consider $H\in C^{\infty}(M)$, then the derivative of the action of $\varphi_t^{H}$ on $\Gamma\W\otimes\Lambda M$ is given by the formula:
\begin{equation*}
\ddt (\varphi_t^{H})^*=(\varphi_t^{H})^*\left(\imath(X_{H})D+D\imath(X_{H})+\invnu\left[- \omega_{ij}y^i X_{H}^j + \frac{1}{2}(\nabla^2_{kq}H) y^k y^q-\imath(X_{H})r,\cdot \right]\right),
\end{equation*}
where $D$ is the Fedosov flat connection obtained with symplectic connection $\nabla$ and a choice of a series of closed $2$-forms.
\end{lemma}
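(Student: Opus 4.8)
The plan is to reduce the statement to the computation of the infinitesimal generator, and then to identify the difference between this generator and the Fedosov Cartan operator $\imath(X_H)D+D\imath(X_H)$ as an \emph{inner} derivation of the Weyl bundle. Since $\varphi_t^H$ is the Hamiltonian flow of $H$, it is a symplectomorphism and so preserves $\omega$; consequently the family $(\varphi_t^H)^*$ is a flow of automorphisms of $(\Gamma\W\otimes\Lambda M,\circ,\wedge)$, because the fiberwise Moyal--Weyl product depends only on $\omega$. By the group property $(\varphi_{t+s}^H)^*=(\varphi_s^H)^*(\varphi_t^H)^*$, it suffices to compute the generator $\mathcal{L}^{\W}_{X_H}:=\ddto(\varphi_t^H)^*$ and to prove
$$\mathcal{L}^{\W}_{X_H}=\imath(X_H)D+D\imath(X_H)+\invnu[\lambda_H,\cdot],\qquad \lambda_H:=-\omega_{ij}y^iX_H^j+\tfrac12(\nabla^2_{kq}H)y^ky^q-\imath(X_H)r;$$
the full formula then follows by composing with $(\varphi_t^H)^*$ on the left.

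First I would observe that both $\mathcal{L}^{\W}_{X_H}$ and $\imath(X_H)D+D\imath(X_H)$ are graded derivations of $(\Gamma\W\otimes\Lambda M,\circ,\wedge)$ of anti-symmetric degree $0$: the former because it generates a flow of automorphisms, the latter because it is the graded commutator of the two graded derivations $\imath(X_H)$ (degree $-1$) and $D$ (degree $+1$). Their difference $\Xi:=\mathcal{L}^{\W}_{X_H}-\imath(X_H)D-D\imath(X_H)$ is therefore again a degree-$0$ derivation. The crucial point is that $\Xi$ carries no base-differentiation content: the only base derivative in $D$ is the de Rham differential hidden in $\partial=d+\invnu[\overline{\Gamma},\cdot]$, and $\imath(X_H)d+d\imath(X_H)$ reproduces exactly the de Rham Lie derivative $\mathcal{L}_{X_H}$ that $\mathcal{L}^{\W}_{X_H}$ itself carries along $X_H$. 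Hence $\Xi$ is of order $0$ in the base variables, i.e.\ a fiberwise (tensorial) derivation of the Weyl algebra $\WW_x$; one checks directly that it annihilates the form generators, for instance $\Xi(dx^i)=0$, since both $\mathcal{L}^{\W}_{X_H}(dx^i)$ and $(\imath(X_H)D+D\imath(X_H))(dx^i)$ reduce to $dX_H^i$ (using $\partial\, dx^i=\deltaFed\, dx^i=[r,dx^i]=0$). Since every continuous, filtration-preserving derivation of $\WW_x$ is inner, we get $\Xi=\invnu[\lambda_H,\cdot]$ for some $\lambda_H\in\Gamma\W$, unique modulo the center.

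It then remains to pin down $\lambda_H$, which I would do by evaluating $\Xi$ on the degree-one generators $y^i$. On the one hand, $\mathcal{L}^{\W}_{X_H}$ acts on the tautological section $y$ through the linearization $\nabla X_H$ of the flow together with the translation of the base point along $X_H$; writing $X_H^j=\omega^{jk}\partial_k H$ and $\nabla_iX_H^j=\omega^{jk}\nabla^2_{ik}H$ turns the $\nabla X_H$ part into the inner derivation by the Hessian quadratic $\tfrac12(\nabla^2_{kq}H)y^ky^q$. On the other hand, expanding $\imath(X_H)D+D\imath(X_H)$ with the graded Leibniz rule converts each inner piece $\invnu[\overline{\Gamma},\cdot]$, $-\deltaFed=\invnu[\omega_{ij}y^idx^j,\cdot]$ and $\invnu[r,\cdot]$ of $D$ into $\invnu[\imath(X_H)(\cdot),\cdot]$: the $\deltaFed$-term accounts for the linear piece $-\omega_{ij}y^iX_H^j$, the $\overline{\Gamma}$-term recombines with the covariant derivative so as to leave no Christoffel remainder, and the $r$-term yields precisely the surviving $-\imath(X_H)r$. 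Matching coefficients on $y^i$ identifies $\lambda_H$ as stated. The main obstacle is exactly this last bookkeeping: one must verify that the Christoffel symbols produced by $\nabla_{X_H}$ cancel against $\imath(X_H)\overline{\Gamma}$, and that the Hamiltonian relation $\nabla_iX_H^j=\omega^{jk}\nabla^2_{ik}H$ (which uses the symplecticity of the flow and the $\omega$-compatibility of $\nabla$) delivers the Hessian term with the correct normalization. The abstract inner-derivation argument already guarantees existence and uniqueness of $\lambda_H$, so only its explicit value is genuinely at stake.
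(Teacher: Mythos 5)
The paper does not prove this lemma at all --- it is imported verbatim from Gutt--Rawnsley \cite{gr3} --- so there is no internal argument to compare yours against; the benchmark is the direct local computation in that reference. Your proposal reconstructs that computation correctly, but packages it differently: rather than computing $\ddto(\varphi_t^H)^*$ against the local expression of $D$ term by term, you first argue abstractly that $\Xi:=\ddto(\varphi_t^H)^*-\imath(X_H)D-D\imath(X_H)$ is a tensorial (fiberwise) derivation of the Weyl bundle --- because the de Rham parts of the two operators coincide by Cartan's formula --- and only then identify it as an inner derivation by evaluating on generators. The reduction to $t=0$ via the flow property, the verification that $\Xi$ kills $dx^i$ and functions, and the bookkeeping you describe (the $\deltaFed$-part of $D$ contracting to $\invnu[\omega_{ij}y^iX_H^j,\cdot\,]$, the $r$-part to $\invnu[\imath(X_H)r,\cdot\,]$, and the $\overline{\Gamma}$-part combining with the Jacobian of the flow into the covariant Hessian) are all correct and, once written out, do yield exactly the stated $\lambda_H$. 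What the abstract step buys you is existence and uniqueness of $\lambda_H$ modulo the center before any computation; what it costs is that the actual content of the lemma --- the explicit value of $\lambda_H$ --- still requires the same local work, which you only sketch.

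Two imprecisions should be repaired, though neither breaks the argument. First, your appeal to ``every continuous, filtration-preserving derivation of $\WW_x$ is inner'' does not apply as stated: $\Xi$ is \emph{not} filtration-preserving, since its linear part $\invnu[-\omega_{ij}y^iX_H^j,\cdot\,]$ lowers the Weyl degree by one. The statement you need is Fedosov's: every $\R[[\nu]]$-linear continuous derivation of the formal Weyl algebra is quasi-inner, i.e.\ of the form $\invnu[a,\cdot]$. Alternatively you can bypass the classification entirely: both $\Xi$ and $\invnu[\lambda_H,\cdot]$ are fiberwise derivations, so it suffices to check agreement on $y^i$, $dx^i$ and functions, which is precisely your final computation. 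Second, the fibre part of $\ddto(\varphi_t^H)^*$ is the coordinate Jacobian $(\partial_iX_H^k)\,y^i\partial_{y^k}$, not ``$\nabla X_H$''; in a Darboux chart this equals $\invnu\bigl[\tfrac12(\partial^2_{ij}H)y^iy^j,\cdot\bigr]$ for $X_H$ Hamiltonian, and it is only after absorbing $\invnu[\imath(X_H)\overline{\Gamma},\cdot\,]$ --- using the total symmetry of $\omega_{lk}\Gamma^k_{ij}$ for a torsion-free $\omega$-preserving connection --- that the coordinate Hessian becomes the covariant one $\tfrac12(\nabla^2_{kq}H)y^ky^q$ appearing in the lemma. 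With these two corrections your outline completes into a valid proof.
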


\begin{lemma}\label{lemme:QHformula}
Let $H\in C^{\infty}(M)$, $J\in \J(M,\omega)$ inducing the symplectic connection $\nabla^{J}$, we have:
\begin{eqnarray} \label{eq:QH}
Q^{J}(H) & = & H-\omega_{ij}y^i X_{H}^j + \frac{1}{2}((\nabla^J)^2_{kq}H) y^k y^q-\imath(X_{H})r^{J}+ \alpha_{J}(\Lr_{X_H}J) \\
& & - \nu(D_J)^{-1}\left(\imath(X_H)\rho^J + \frac{1}{2}\left(\delta^J\Lr_{X_H}J\right)^{b_J}\right). \nonumber
\end{eqnarray}
Moreover, if $J$ is integrable,
$$Q^{J}(H-\frac{\nu}{2}\Delta^J H) =  H-\frac{\nu}{2}\Delta^J H-\omega_{ij}y^i X_{H}^j + \frac{1}{2}((\nabla^J)^2_{kq}H) y^k y^q-\imath(X_{H})r^{J}+ \alpha_{J}(\Lr_{X_H}J)$$
\end{lemma}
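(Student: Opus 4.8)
The plan is to verify that the right-hand side of \eqref{eq:QH}, which I abbreviate $a$, is the unique Fedosov-flat section with symbol $H$, so that $a=Q^J(H)$ by bijectivity of $\sigma$ on $\Gamma\W_{D^J}$. Set $\widehat{H}_0:=-\omega_{ij}y^iX_H^j+\frac12((\nabla^J)^2_{kq}H)y^ky^q-\imath(X_H)r^J$ (the generator of Lemma \ref{lemme:Liederiv}), $\widehat{H}:=H+\widehat{H}_0$, and $\eta:=\imath(X_H)\rho^J+\frac12(\delta^J\Lr_{X_H}J)^{b_J}$, so that $a=\widehat{H}+\alpha_J(\Lr_{X_H}J)-\nu(D^J)^{-1}\eta$.

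First I would dispatch the easy points. Applying Corollary \ref{cor:variationHermitian} to the path $J_t:=(\varphi_t^H)^*J$ (for which $\ddto J_t=\Lr_{X_H}J$) and comparing with the equivariance $\ddto\rho^{J_t}=\Lr_{X_H}\rho^J=d\,\imath(X_H)\rho^J$ coming from Lemma \ref{lemme:equivHRicci} gives $d\eta=0$. A scalar ($y$-degree $0$) $1$-form is central for $\circ$ and killed by $\deltaFed$, hence $D^J\eta=d\eta=0$; thus $(D^J)^{-1}\eta$ is defined by Lemma \ref{lemme:Dinverse}, with $D^J((D^J)^{-1}\eta)=\eta$ and $((D^J)^{-1}\eta)|_{y=0}=0$. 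Since $r^J=\deltaFed^{-1}(\cdots)$ has $y$-degree $\ge 1$ and $\alpha_J(\cdot)=-Q^J(\deltaFed^{-1}(\cdots))$ likewise restricts to $0$ at $y=0$, one has $r^J|_{y=0}=0$ and $\alpha_J(\Lr_{X_H}J)|_{y=0}=0$; together with the vanishing at $y=0$ of the degree $1$ and $2$ terms of $\widehat{H}$, this gives $a|_{y=0}=H$.

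The crux is flatness $D^Ja=0$. Here I would combine Lemma \ref{lemme:Liederiv} with the naturality of the Fedosov construction. By the equivariance of $\nabla^J$ and $\rho^J$ (Proposition \ref{prop:nablaJ}, Lemma \ref{lemme:equivHRicci}), conjugation by $(\varphi_t^H)^*$ carries $D^J$ to $D^{J_t}$; differentiating at $t=0$ and noting that the Cartan part $\imath(X_H)D^J+D^J\imath(X_H)$ commutes with $D^J$ (as $(D^J)^2=0$), Lemma \ref{lemme:Liederiv} yields the operator identity $\ddto D^{J_t}=-\frac1\nu[D^J\widehat{H}_0,\cdot]$. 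Since $\ddto D^{J_t}=\frac1\nu[\ddto\overline{\Gamma}^{J_t}+\ddto r^{J_t},\cdot]$ and, by the definition of $\alpha$, $D^J\alpha_J(\Lr_{X_H}J)=\ddto\overline{\Gamma}^{J_t}+\ddto r^{J_t}+\frac\nu2(\delta^J\Lr_{X_H}J)^{b_J}$, this forces $D^J\widehat{H}_0+D^J\alpha_J(\Lr_{X_H}J)=\frac\nu2(\delta^J\Lr_{X_H}J)^{b_J}+c_0$ for a central $1$-form $c_0$. Substituting into $D^Ja=D^J\widehat{H}+D^J\alpha_J(\Lr_{X_H}J)-\nu\eta$ (with $D^JH=dH$) collapses everything to $c_0+dH-\nu\,\imath(X_H)\rho^J$. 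It then remains to identify $c_0$, which, since $\overline{\Gamma}^{J_t}$ is of $y$-degree $2$ and $r^{J_t}$ of $y$-degree $\ge 1$, equals the $y$-degree-$0$ part of $D^J\widehat{H}_0$; a direct low-degree Fedosov computation (the $-\deltaFed$ of the $y$-degree-$1$ piece $-\omega_{ij}y^iX_H^j$ and of the $\nu y$-component of $r^J$ produced by $\deltaFed^{-1}(\nu\rho^J)$, plus the matching $\overline{\Gamma}^J$- and $r^J$-commutator contributions) gives $c_0=-dH+\nu\,\imath(X_H)\rho^J$, whence $D^Ja=0$. I expect this explicit determination of $c_0$ to be the main obstacle, being the one genuinely non-formal step.

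For the integrable case I would feed the $\nu$-dependent argument $H-\frac\nu2\Delta^JH$ into the formula just proved and show the $(D^J)^{-1}$-term is absorbed. Two inputs are needed. First, the elementary identity $(D^J)^{-1}(df)=f-Q^J(f)$ for $f\in C^\infty(M)[[\nu]]$, which follows from $D^J(Q^J(f)-f)=-df$, $(Q^J(f)-f)|_{y=0}=0$ and the uniqueness in Lemma \ref{lemme:Dinverse}. Second, the K\"ahler identity $\eta=\frac12 d\Delta^JH$, valid when $J$ is integrable (here $\rho^J$ is the genuine Ricci form and the standard K\"ahler commutation relations, as in \cite{Gaud}, render the closed form $\eta$ exact with this explicit primitive). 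Granting these, $(D^J)^{-1}\eta=\frac12(\Delta^JH-Q^J(\Delta^JH))$, so that $Q^J(H)=\widehat{H}+\alpha_J(\Lr_{X_H}J)+\frac\nu2(Q^J(\Delta^JH)-\Delta^JH)$; using the $\nu$-linearity of $Q^J$ this rearranges exactly to $Q^J(H-\frac\nu2\Delta^JH)=\widehat{H}-\frac\nu2\Delta^JH+\alpha_J(\Lr_{X_H}J)$, the claimed formula. The outstanding work in this case is the proof of the K\"ahler identity $\eta=\frac12 d\Delta^JH$.
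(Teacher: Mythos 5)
Your argument is correct, but it follows a genuinely different (and more self-contained) route than the paper. The paper's proof simply imports from \cite{LLFformalDonaldson} the identity
$Q^{J}(H)=H-\omega_{ij}y^i X_{H}^j+\frac{1}{2}((\nabla^J)^2_{kq}H)y^ky^q-\imath(X_{H})r^{J}+(D^J)^{-1}\bigl(\ddto\overline{\Gamma}^{\varphi_{-t}^H.J}+\ddto r^{\varphi_{-t}^H.J}-\nu\imath(X_H)\rho^J\bigr)$
and then merely regroups the $(D^J)^{-1}$-term so as to exhibit $\alpha_J(\Lr_{X_H}J)$; the only thing it verifies is the closedness of $\eta:=\imath(X_H)\rho^J+\frac12(\delta^J\Lr_{X_H}J)^{b_J}$, which it does exactly as you do, via Lemma \ref{lemme:equivHRicci} and Corollary \ref{cor:variationHermitian}. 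You instead re-derive the imported identity from scratch by characterizing $Q^J(H)$ as the unique $D^J$-flat section with symbol $H$: your operator identity $\ddto D^{J_t}=-\frac{1}{\nu}[D^J\widehat{H}_0,\cdot]$ (from Lemma \ref{lemme:Liederiv}, naturality of the Fedosov construction under $\ham(M,\omega)$, and $(D^J)^2=0$) correctly reduces everything to a central $1$-form $c_0=\ddto\overline{\Gamma}^{J_t}+\ddto r^{J_t}+D^J\widehat{H}_0$, and your claimed value $c_0=-dH+\nu\,\imath(X_H)\rho^J$ is right: since $\overline{\Gamma}^{J}$ and $r^{J}$ have positive $y$-degree, $c_0$ equals the $y$-degree-zero part of $D^J\widehat{H}_0$, namely $-\deltaFed(-\omega_{ij}y^iX_H^j)=-dH$ together with the single-contraction commutators involving the $y$-linear piece $\deltaFed^{-1}(-\nu\rho^J)$ of $r^J$, which contribute $\nu\,\imath(X_H)\rho^J$. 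This two-line computation is the one step you leave as a sketch, and it is precisely the content the paper outsources to \cite{LLFformalDonaldson}, so you should write it out; once done, your proof is complete and has the advantage of making the lemma independent of the earlier paper. For the integrable case, your reduction via $(D^J)^{-1}(df)=f-Q^J(f)$ and the $\nu$-linearity of $Q^J$ is correct and supplies details the paper omits; the remaining identity $\eta=\frac12 d\Delta^JH$ is exactly the paper's Lemma \ref{lemme:exactLaplacian}, proved separately in the appendix, so you are right to treat it as an external input rather than part of this lemma's burden.
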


\begin{proof}
In \cite{LLFformalDonaldson}, we obtained (adapted to the notations of the present paper)
\begin{eqnarray*}
Q^{J}(H)& = & H-\omega_{ij}y^i X_{H}^j + \frac{1}{2}((\nabla^J)^2_{kq}H) y^k y^q-\imath(X_{H})r^{J} \\
& & + (D^J)^{-1}\left(\ddto \overline{\Gamma}^{\varphi_{-t}^H.J}+\ddto r^{\varphi_{-t}^H.J}-\nu\imath(X_H)\rho^J\right),
\end{eqnarray*}
where the last term is what is hidden in the connection form in \cite{LLFformalDonaldson}.

In Equation \eqref{eq:QH}, we add what is needed to make appear $\alpha_{J}(\Lr_{X_H}J) $ provided that $\imath(X_H)\rho^J + \frac{1}{2}\left(\delta^J\Lr_{X_H}J\right)^{b_J}$ is a closed $1$-form. But that follows from the equivariance of the Hermitian Ricci form (Lemma \ref{lemme:equivHRicci})
$$d\imath(X_H)\rho^J=\ddto \rho^{\varphi_{-t}^H.J},$$
and from Corollary \ref{cor:variationHermitian}
$$d\left(\frac{1}{2}\left(\delta^J\Lr_{X_H}J \right)^{b_J}\right)=-\ddto \rho^{\varphi_{-t}^H.J}.$$

\noindent In the K\"ahler case, the formula follows from Lemma \ref{lemme:exactLaplacian} below.
\end{proof}

\begin{lemma} \label{lemme:exactLaplacian}
If $(M,\omega,J)$ is K\"ahler, then 
$$\imath(X_H)\rho^J + \frac{1}{2}\left(\delta^J\Lr_{X_H}J\right)^{b_J}=d(\frac{1}{2}\Delta^J H).$$
\end{lemma}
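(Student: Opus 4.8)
The plan is to prove the identity directly, by computing the divergence term $\delta^J(\Lr_{X_H}J)^{b_J}$ in K\"ahler geometry and reducing the whole left-hand side to $d(\tfrac12\Delta^J H)$. Since the Hamiltonian flow preserves $\omega$ and maps $\J(M,\omega)$ into itself, $\Lr_{X_H}J$ is a genuine tangent vector at $J$, so the operation $\delta^J(\cdot)^{b_J}$ makes sense on it. As a preliminary sanity check, one already sees the left-hand side is closed: applying Corollary \ref{cor:variationHermitian} to the path $J_t:=\varphi_{-t}^H\cdot J$, whose velocity at $0$ is $\Lr_{X_H}J$, gives $\ddto\rho^{J_t}=-\tfrac12 d(\delta^J(\Lr_{X_H}J)^{b_J})$, while Lemma \ref{lemme:equivHRicci} together with $d\rho^J=0$ gives $\ddto\rho^{J_t}=d\,\imath(X_H)\rho^J$; subtracting shows $d(\imath(X_H)\rho^J+\tfrac12\delta^J(\Lr_{X_H}J)^{b_J})=0$. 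Closedness alone does not pin down the primitive, so the actual work is to identify it.

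First I would use the K\"ahler identity $(\Lr_{X_H}J)(Z)=J\nablaLC_Z X_H-\nablaLC_{JZ}X_H$, which holds because $\nablaLC J=0$. Lowering with $g_J$ and taking $\delta^J$, the term $\sum_i\nablaLC^2_{e_i,e_i}X_H$ contributes the rough-Laplacian piece $-(J\,\mathrm{tr}\,\nablaLC^2 X_H)^{b_J}$, while the mixed term reorganizes: substituting the orthonormal frame $f_i:=Je_i$ and using the Ricci identity $\nablaLC^2_{U,V}-\nablaLC^2_{V,U}=R^{g_J}(U,V)$ shows $\sum_i\nablaLC^2_{e_i,Je_i}X_H=\tfrac12\sum_i R^{g_J}(e_i,Je_i)X_H$, and Equation \eqref{eq:RicciKahler} turns this curvature term into $-\imath(X_H)\rho^J$. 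Altogether $\delta^J(\Lr_{X_H}J)^{b_J}=-(J\,\mathrm{tr}\,\nablaLC^2 X_H)^{b_J}-\imath(X_H)\rho^J$, so the left-hand side collapses to $\tfrac12\imath(X_H)\rho^J-\tfrac12(J\,\mathrm{tr}\,\nablaLC^2 X_H)^{b_J}$, with one copy of the Ricci form already cancelled.

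Next I would feed in $X_H=J\,\grad H$ (so that $JX_H=-\grad H$ and $X_H^{b_J}=-dH\circ J$) together with the parallelism of $J$ to rewrite $(J\,\mathrm{tr}\,\nablaLC^2 X_H)^{b_J}=\nabla^*\nabla(dH)$. The Weitzenb\"ock formula $\Delta_{\mathrm{Hodge}}=\nabla^*\nabla+\ric$ on $1$-forms, applied to the exact form $dH$, converts $\nabla^*\nabla(dH)$ into $d(\Delta_{\mathrm{Hodge}}H)-\ric(\grad H,\cdot)$; meanwhile the definition of the Ricci form gives $\imath(X_H)\rho^J=\mathrm{ric}(X_H,\cdot)=\ric(JX_H,\cdot)=-\ric(\grad H,\cdot)$. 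Substituting, the two Ricci contractions cancel and the left-hand side reduces to $-\tfrac12 d(\Delta_{\mathrm{Hodge}}H)$. Finally the standard K\"ahler identity relating the $\omega$-trace of $d(dH\circ J)$ to the Hodge Laplacian (precisely the definition $\Delta^J H=-\tfrac12\Lambda^{ks}(d(dH\circ J))_{ks}$) yields $\Delta^J H=-\Delta_{\mathrm{Hodge}}H$, whence the left-hand side is exactly $d(\tfrac12\Delta^J H)$.

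The main obstacle is the middle step: organizing the divergence of $(\Lr_{X_H}J)^{b_J}$ so that the curvature cleanly separates into the rough Laplacian plus precisely one Ricci form through \eqref{eq:RicciKahler}, and then tracking every sign convention (namely $g_J=\omega(\cdot,J\cdot)$, the normalization of $X_H$, and the signs of $\Lambda$ and of $\Delta^J$) through the Weitzenb\"ock identity so that the final Laplacian appears with the correct coefficient $\tfrac12$ and sign. Once the conventions are fixed consistently, each cancellation is forced and the identity follows.
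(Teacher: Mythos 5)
Your argument is essentially the paper's: the same K\"ahler formula $(\Lr_{X_H}J)(Z)=J\nablaLC_Z X_H-\nablaLC_{JZ}X_H$, the same splitting of $\delta^J(\Lr_{X_H}J)^{b_J}$ into a curvature piece handled by \eqref{eq:RicciKahler} and a rough-Laplacian piece handled by Weitzenb\"ock, with the two Ricci contributions cancelling; the closedness check via Corollary \ref{cor:variationHermitian} is a nice extra but not needed. The one substantive caveat is your final sign bookkeeping: with $\Lambda$ the inverse of the coordinate matrix of $\omega$ and $g_J=\omega(\cdot,J\cdot)$, a flat-chart computation gives $\Delta^J H=-(H_{xx}+H_{yy})=+\Delta_{\mathrm{Hodge}}H$, not $-\Delta_{\mathrm{Hodge}}H$; correspondingly the paper's Hamiltonian convention is $\imath(X_H)\omega=dH$, i.e.\ $X_H=-J\,\grad H$ (this is forced by the linear term $-\omega_{ij}y^iX_H^j$ of $Q^J(H)$ in Lemma \ref{lemme:QHformula} reducing to $+dH$). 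You have flipped both conventions at once, and the two flips cancel, so your computation does land on the stated identity --- but each individual claim (``$X_H=J\,\grad H$'', ``$\Delta^J=-\Delta_{\mathrm{Hodge}}$'') is inconsistent with the rest of the paper and would produce the wrong sign if only one were corrected.
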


\noindent The proof of Lemma \ref{lemme:exactLaplacian} is postponed to the appendix.

\noindent We now prove the main Theorem.

\begin{proof}[Proof of Theorem \ref{theor:formalDFpicture}]
The proof is similar to the cases studied in \cite{LLFlast} and \cite{LLFformalDonaldson}. To shorten the proof we work directly with $\widetilde{\mu}$ whose expression makes sense in the almost-K\"ahler case and, at first order in $\nu$, co\"incides with $\mu$.

The equivariance is immediate from the naturality of the Fedosov construction and the equivariance of all of its ingredients from Proposition \ref{prop:nablaJ} and Lemma \ref{lemme:equivHRicci}. 

We check the formal moment map equation in the K\"ahler case. For $J\in\J(M,\omega)$, and the path $t\mapsto J_t\in \J(M,\omega)$ through $J$ such that $\ddto J_t =A \in T_J\J(M,\omega)$ 
$$ \left(d^{\J}\widetilde{\mu}(H)\right)(A)=4(2\pi)^m\nu^{m-1}\ddto\tr^{*_{J_t}}(H-\frac{\nu}{2}\Delta^J H)$$
Using Lemma \ref{lemme:tracevariation} and after the formulas from Lemmas \ref{lemme:Liederiv} and \ref{lemme:QHformula},
\begin{eqnarray*}
\ddto\tr^{*_{J_t}}(H-\frac{\nu}{2}\Delta^{J_t} H) & = & \tr^{*_{J}}\left(\left.\invnu[\alpha_{J}(A),Q^{J}(H-\frac{\nu}{2}\Delta^J H)]\right|_{y=0}\right)-\frac{\nu}{2}\tr^{*_{J}}(\ddto \Delta^{J_t} H),  \\
 & \hspace{-2.7cm} = & \hspace{-1.5cm} \tr^{*_{J}}\left(\left.\invnu[\alpha_{J}(A),\alpha_J(\Lr_{X_H}J)]\right|_{y=0}\right)+\tr^{*_{J}}\left(\left.-\ddto (\varphi_t^H)^*\alpha_{J}(A) \right|_{y=0}\right)\\
& &\hspace{-1.5cm} +\,\tr^{*_{J}}\left(\left.\left(\imath(X_{H})D^{J}+D^{J}\imath(X_{H})\right)\alpha_{J}(A)\right|_{y=0}\right) -\frac{\nu}{2}\tr^{*_{J}}(\ddto \Delta^{J_t} H)
\end{eqnarray*}
Now, $\alpha_{J}(A)$ is a $0$-form, all of its terms contain $y$'s. So, at $y=0$ it remains,
\begin{eqnarray*}
\ddto\tr^{*_{J_t}}(H-\frac{\nu}{2}\Delta^{J_t} H) & = & \tr^{*_{J}}\left(\left.\invnu[\alpha_{J}(A),\alpha_{J}(\Lr_{X_H}J)] + \imath(X_{H})D^{J}\alpha_{J}(A)\right|_{y=0}\right) \\
& & -\,\frac{\nu}{2}\tr^{*_{J}}(\ddto \Delta^{J_t} H).
\end{eqnarray*}
By the definition of $\alpha$ and $\Rr_J$, we have
\begin{eqnarray} \label{eq:endaKcase}
\ddto\tr^{*_{J_t}}(H-\frac{\nu}{2}\Delta^{J_t} H) & = & -\,\tr^{*_J}\left(\left.\Rr_J(\Lr_{X_H}J,A)\right|_{y=0}\right) -\frac{\nu}{4} \tr^{*_{J}}\left( \mathrm{Tr}(JA\Lr_{X_H}J)\right) \\
& & +\, \frac{\nu}{2} \tr^{*_J}\left((\delta^J A)^{b_J}(X_H)\right) - \frac{\nu}{2}\tr^{*_{J}}(\ddto \Delta^{J_t} H) \nonumber
\end{eqnarray}
Finally, in the K\"ahler case, the Lemma \ref{lemme:ddtoLaplacian} below implies
\begin{equation*} \label{eq:conclusion1}
 -\,\frac{\nu}{4} \tr^{*_{J}}\left( \mathrm{Tr}(JA\Lr_{X_H}J)\right) 
+ \frac{\nu}{2} \tr^{*_J}\left((\delta^J A)^{b_J}(X_H)\right) - \frac{\nu}{2}\tr^{*_{J}}(\ddto \Delta^{J_t} H)=0.
\end{equation*}
So that, one obtains the formal moment map equation 
$$\ddto 4(2\pi)^m\nu^{m-1} \tr^{*_{J_t}}(H-\frac{\nu}{2}\Delta^{J_t} H)  =-\left(\imath(\Lr_{X_H}J)\Omtilde\right)(A).$$

In the almost-K\"ahler case, Equation \eqref{eq:endaKcase} is still valid at order $1$ in $\nu$ as there is no contribution of the Laplacian because the trace starts with the integral functional. Hence, using Lemma \ref{lemme:formula}, one get
$$ \ddto 4(2\pi)^m\nu^{m-1} \tr^{*_{J_t}}(H)  =-\left(\imath(\Lr_{X_H}J)\Omtilde\right)(A)+O(\nu).$$

At first order in $\nu$, one knows (see \cite{FutLLF2} for example) the first terms of the normalised trace:
$$4(2\pi)^m\nu^{m-1}\tr^{*_{J}}(H)=-\int_M HS^J\dvol + O(\nu),$$
which is the Donaldson-Fujiki moment map.
\end{proof}

\begin{lemma} \label{lemme:ddtoLaplacian}
For $J\in \J_{int}(M,\omega)$ and $H\in C^{\infty}(M)$, one compute
$$\ddto \Delta^{J_t} H = (\delta^J A)^{b_J}(X_H) -  \mathrm{Tr}(JA\Lr_{X_H}J)$$
\end{lemma}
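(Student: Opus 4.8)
The plan is to compute $\ddto\Delta^{J_t}H$ directly from the definition, exploiting that the only $J$-dependence in $\Delta^J H=-\frac12\Lambda^{ks}(d(dH\circ J))_{ks}$ sits in the $1$-form $dH\circ J$, since the bivector $\Lambda=\omega^{-1}$ is fixed along the path. Writing $(dH\circ A)(Y):=dH(AY)$, differentiation at $t=0$ gives $\ddto\Delta^{J_t}H=-\frac12\Lambda^{ks}\big(d(dH\circ A)\big)_{ks}$. This is a pointwise identity at the base point $J$, so I am free to evaluate the exterior derivative with the Levi-Civita connection $\nabla^{g_J}$, and crucially to use that $\nabla^{g_J}J=0$ because $J\in\J_{int}$ is Kähler.

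First I would expand, using torsion-freeness, $d(dH\circ A)(X,Y)=(\nabla^{g_J}_X(dH\circ A))(Y)-(\nabla^{g_J}_Y(dH\circ A))(X)$, and apply the Leibniz rule to obtain $\mathrm{Hess}\,H(X,AY)-\mathrm{Hess}\,H(Y,AX)+dH((\nabla^{g_J}_X A)Y)-dH((\nabla^{g_J}_Y A)X)$, where $\mathrm{Hess}\,H=\nabla^{g_J}dH$. I then contract with $\Lambda$, which in a $g_J$-orthonormal frame reduces (up to sign) to the alternating sum $\sum_s(\,\cdot\,)(e_s,Je_s)$, because the frame matrix of $\omega$ is that of $J$ up to sign and $\Lambda$ is its inverse. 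This cleanly splits the result into a \emph{Hessian piece} and a \emph{$\nabla A$ piece}, with no cross terms.

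For the Hessian piece I use that $A$ is $g_J$-self-adjoint and anti-commutes with $J$ (both built into $T_J\J(M,\omega)$) to collapse the frame sum into a multiple of $\mathrm{Tr}(JA\circ\mathrm{Hess}\,H)$. I then identify this with $\mathrm{Tr}(JA\,\Lr_{X_H}J)$ through the Kähler formula $(\Lr_{X_H}J)Y=-\nabla^{g_J}_{JY}X_H+J\nabla^{g_J}_Y X_H$ together with $\nabla^{g_J}X_H=\pm J\circ\mathrm{Hess}\,H$ (valid since $X_H$ is Hamiltonian and $\nabla^{g_J}J=0$), which yields $\Lr_{X_H}J=\pm(J\,\mathrm{Hess}\,H\,J+\mathrm{Hess}\,H)$; the algebraic identity $JAJ=A$ and cyclicity of the trace then convert the Hessian term into the trace term. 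For the $\nabla A$ piece I differentiate $AJ=-JA$ using $\nabla^{g_J}J=0$ to see that $\nabla^{g_J}_X A$ also anti-commutes with $J$; this lets me rewrite $\sum_s(\nabla^{g_J}_{e_s}A)(Je_s)-\sum_s(\nabla^{g_J}_{Je_s}A)(e_s)$ as a multiple of $J\,\delta^J A$, and pairing against $dH$ via $X_H=\pm J\,\grad H$ produces exactly $(\delta^J A)^{b_J}(X_H)$.

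The conceptual content is short; the real work — and the main obstacle — is the bookkeeping of the musical isomorphisms and the conversion between the symplectic contraction $\Lambda^{ks}(\cdot)_{ks}$ and the Riemannian trace, where the factors of $\tfrac12$, the powers of $J$, and the signs coming from the Hamiltonian convention ($\imath_{X_H}\omega=\pm dH$) and from the chosen sign of $\Lr_{X_H}J$ must all be pinned down consistently; in particular the precise numerical constant in front of $\mathrm{Tr}(JA\,\Lr_{X_H}J)$ is exactly the delicate point. I would fix all these once and for all by testing the final formula on the flat model $(\R^2,\omega_{\mathrm{std}},J_{\mathrm{std}})$ with $A$ constant, so that the $\nabla A$ piece drops out and only the trace term survives, which isolates and determines that constant (and cross-checks it against the cancellation of the $\mathrm{Tr}$-, $\delta^J$- and Laplacian-terms demanded in the proof of Theorem \ref{theor:formalDFpicture}).
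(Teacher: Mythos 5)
Your proposal follows essentially the same route as the paper's own proof: differentiate $\Delta^{J_t}H=-\frac12\Lambda^{ks}\left(d(dH\circ J_t)\right)_{ks}$ in $t$, expand the exterior derivative with the Levi-Civita connection (which is parallel for $\omega$ and $J$ in the K\"ahler case), and split the result into a Hessian piece identified with $\mathrm{Tr}(JA\Lr_{X_H}J)$ via $(\Lr_{X_H}J)Y=-\nablaLC_{JY}X_H+J\nablaLC_Y X_H$ and a $\nablaLC A$ piece giving $(\delta^J A)^{b_J}(X_H)$. Your insistence that the coefficient of the trace term is the delicate point is well placed: the paper's own computation ends with $-\frac12\,\mathrm{Tr}(JA\Lr_{X_H}J)$ rather than the $-\mathrm{Tr}(JA\Lr_{X_H}J)$ of the statement, and it is the $-\frac12$ version that makes the cancellation $-\frac{\nu}{4}\mathrm{Tr}(JA\Lr_{X_H}J)+\frac{\nu}{2}(\delta^JA)^{b_J}(X_H)-\frac{\nu}{2}\ddto\Delta^{J_t}H=0$ in the proof of Theorem 1 hold identically, so the flat-model cross-check you propose would indeed settle the constant and expose this discrepancy.
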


\begin{lemma} \label{lemme:formula}
For $J\in \J(M,\omega)$ and $H\in C^{\infty}(M)$, we have
$$\int_M (\delta^J A)^{b_J}(X_H) \dvol = \int_M \mathrm{Tr}(JA\Lr_{X_H}J)\dvol$$
\end{lemma}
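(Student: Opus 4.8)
The plan is to prove this integral identity by integrating the left-hand side by parts and simplifying the right-hand side pointwise, using crucially that $\dvol=\frac{\omega^m}{m!}$ is exactly the Riemannian volume of $g_J$, so that the $g_J$-divergence of any vector field integrates to zero on the closed manifold $M$. First I would rewrite the left-hand integrand as $(\delta^J A)^{b_J}(X_H)=g_J(\delta^J A,X_H)$ and integrate by parts. Since $A$ is $g_J$-self-adjoint (this follows from $A\in T_J\J(M,\omega)$, i.e. $\omega(\cdot,A\cdot)$ symmetric together with $AJ=-JA$), this boundary-free manipulation turns $\int_M(\delta^J A)^{b_J}(X_H)\dvol$ into $\int_M\langle A,\mathrm{sym}\,\nablaLC X_H\rangle\dvol$, the integral of $A$ contracted with the symmetric part of the endomorphism $Y\mapsto\nablaLC_Y X_H$; the antisymmetric part drops out against the symmetric $A$, and the total divergence $\mathrm{div}(AX_H)$ produced by the integration by parts integrates to zero.

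Next I would treat the right-hand side pointwise. Writing the Lie derivative through the torsion-free connection $\nablaLC$ gives $\Lr_{X_H}J=\nablaLC_{X_H}J+[J,\nablaLC X_H]$, where $\nablaLC X_H$ denotes the endomorphism $Y\mapsto\nablaLC_Y X_H$. Here I use elementary algebraic facts valid for any $A\in T_J\J(M,\omega)$: from $AJ=-JA$ and $J^2=-\mathrm{Id}$ one gets $JAJ=A$ and that $JA$ is $g_J$-symmetric, while $\nablaLC_{X_H}J$ is $g_J$-skew (differentiate $g_J(J\cdot,J\cdot)=g_J(\cdot,\cdot)$). Consequently $\mathrm{Tr}(JA\,\nablaLC_{X_H}J)=0$, and expanding the commutator with $JAJ=A$ collapses $\mathrm{Tr}(JA\,\Lr_{X_H}J)$ into a multiple of $\mathrm{Tr}(A\,\nablaLC X_H)$.

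The step linking the two computations — and the heart of the argument — is that $X_H$ is Hamiltonian, so $\Lr_{X_H}\omega=0$. Combined with $g_J=\omega(\cdot,J\cdot)$ and the Leibniz rule for $\Lr_{X_H}$, this yields $\Lr_{X_H}g_J=\omega(\cdot,(\Lr_{X_H}J)\cdot)$, which I would rewrite as the endomorphism identity $2\,\mathrm{sym}(\nablaLC X_H)=-J(\Lr_{X_H}J)$ using $\Lr_{X_H}g_J(Y,Z)=g_J(\nablaLC_Y X_H,Z)+g_J(Y,\nablaLC_Z X_H)$. Substituting this into the reduced left-hand side and using $AJ=-JA$ once more identifies $\int_M\langle A,\mathrm{sym}\,\nablaLC X_H\rangle\dvol$ with the corresponding multiple of $\int_M\mathrm{Tr}(JA\,\Lr_{X_H}J)\dvol$, giving the claimed equality once the constants are matched.

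The main obstacle I anticipate is bookkeeping rather than conceptual: keeping the musical conventions ($b_J$ and $\sharp$ taken with $g_J$), the sign built into $\delta^J$, and the numerical constants consistent through the two integrations by parts and the trace manipulations, so that the total-divergence terms genuinely cancel upon integration and the factor relating $\mathrm{sym}(\nablaLC X_H)$ to $\Lr_{X_H}J$ comes out correctly. A secondary point to verify carefully is that $\Lr_{X_H}J$ indeed lies in $T_J\J(M,\omega)$ — which it does, since the $\ham(M,\omega)$-action preserves $\J(M,\omega)$ — so that the anticommutation with $J$ and the $g_J$-skewness of $\nablaLC_{X_H}J$ may be invoked even when $J$ is only almost-complex; this is precisely what makes the statement hold for all $J\in\J(M,\omega)$ and not merely in the integrable case.
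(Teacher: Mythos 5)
Your strategy is sound and is essentially the paper's own argument read in the opposite direction: both rest on (i) the adjointness of $\delta^J$ and $\nablaLC$ for the $L^2$-pairing of the $g_J$-volume $\dvol$, which turns the left-hand side into $\int_M\mathrm{Tr}(A\,\nablaLC X_H)\,\dvol$ (only the $g_J$-symmetric part of $Y\mapsto\nablaLC_YX_H$ surviving against the $g_J$-symmetric $A$), and (ii) the consequence of $\Lr_{X_H}\omega=0$ expressing $\Lr_{X_H}J$ through $\nablaLC X_H$. Your algebraic inputs ($JAJ=A$, $JA$ symmetric, $\nablaLC_XJ$ skew, hence $\mathrm{Tr}(JA\,\nablaLC_{X_H}J)=0$) are all correct and none requires integrability.

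The step you postpone --- ``once the constants are matched'' --- is, however, exactly where the difficulty sits, and it is not mere bookkeeping. Carrying out your own plan with $P$ denoting the endomorphism $Y\mapsto\nablaLC_YX_H$: the decomposition $\Lr_{X_H}J=\nablaLC_{X_H}J+JP-PJ$ together with $\mathrm{Tr}(JAJP)=\mathrm{Tr}(AP)$ and $\mathrm{Tr}(JAPJ)=\mathrm{Tr}(J^2AP)=-\mathrm{Tr}(AP)$ gives the pointwise identity $\mathrm{Tr}(JA\,\Lr_{X_H}J)=2\,\mathrm{Tr}(AP)$, whereas the integration by parts on the left gives $\int_M(\delta^JA)^{b_J}(X_H)\,\dvol=\int_M\mathrm{Tr}(AP)\,\dvol$ with coefficient $1$. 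So your argument, done carefully, yields
\begin{equation*}
\int_M \mathrm{Tr}(JA\,\Lr_{X_H}J)\,\dvol \;=\; 2\int_M (\delta^JA)^{b_J}(X_H)\,\dvol ,
\end{equation*}
and not the stated equality: in the paper's formula $g_J((\Lr_{X_H}J)U,V)=-g_J(\nablaLC_{JU}X_H,V)-g_J(JU,\nablaLC_VX_H)$, each of the two terms contributes one full copy of $\int_M\mathrm{Tr}(AP)\,\dvol$ when paired against $g_J(JA\cdot,\cdot)$. You should therefore not wave at the constants but track them: the factor $2$ is real (it reappears in the mismatch between the statement and the proof of Lemma \ref{lemme:ddtoLaplacian}), and it is the version with the factor $2$ that actually makes the cancellation $-\frac{\nu}{4}\int_M\mathrm{Tr}(JA\Lr_{X_H}J)\dvol+\frac{\nu}{2}\int_M(\delta^JA)^{b_J}(X_H)\dvol=0$ in the proof of Theorem \ref{theor:formalDFpicture} come out right. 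As a minor point, your closing remark about needing $\Lr_{X_H}J\in T_J\J(M,\omega)$ is correct but not actually used in your computation, which only invokes properties of $A$ and of $\nablaLC_{X_H}J$.
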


\noindent The proofs of the above two Lemmas is contained in the Appendix.

\begin{rem}
We suspect that the Lemmas \ref{lemme:exactLaplacian} and \ref{lemme:ddtoLaplacian} are valid in the general almost-K\"ahler case. We postpone this task to a future work.
\end{rem}

Our last corollary, contains at first order in $\nu$ the link that was presented in \cite{LLF}. Namely, that closedness of $*_J$ translates into the vanishing of a (formal) moment map. We say $*_J$ is closed up to order $n$ if the integral is a trace for $*_J$ modulo terms in $\nu^{n+1}$.

\begin{cor} 
For $\J\in T_J\J_{int}(M,\omega)$.\\ The star product $*_J$ is closed up to order $n$ if and only if $\widetilde{\mu}(J)=0+O(\nu^{n+1})$.
\end{cor}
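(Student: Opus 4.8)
The plan is to reduce the statement to a nondegeneracy property of the trace density. First I would unwind the definition of $\widetilde{\mu}$ using the trace density $\rho^{J}$ of $*_{J}$: since $\tr^{*_{J}}(F)=\frac{1}{(2\pi\nu)^m}\int_M F\rho^{J}\dvol$, one gets for $H\in C^{\infty}_0(M)$
\begin{equation*}
\widetilde{\mu}(H)(J)=\frac{4}{\nu}\int_M\left(H-\frac{\nu}{2}\Delta^{J}H\right)\rho^{J}\dvol .
\end{equation*}
Here I read $\widetilde{\mu}(J)=0$ in the usual moment-map sense, namely $\widetilde{\mu}(H)(J)=0$ for every $H\in C^{\infty}_0(M)$. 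Because $H$ and $\Delta^{J}H$ both integrate to zero against $\dvol$, the $x$-constant part of $\rho^{J}$ pairs to zero, so only the non-constant part $\widetilde{\rho}^{J}:=\rho^{J}-\overline{\rho^{J}}$ contributes. Thus $\widetilde{\mu}(\cdot)(J)$ is, up to the factor $\frac{4}{\nu}$, the $L^2$-pairing of $\widetilde{\rho}^{J}$ with the image of $C^{\infty}_0(M)[[\nu]]$ under $\mathrm{id}-\frac{\nu}{2}\Delta^{J}$.

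The second ingredient is the standard characterization of closedness: the functional $F\mapsto\frac{1}{(2\pi\nu)^m}\int_M F\dvol$ is a trace for $*_{J}$ exactly when it is proportional to the normalized trace $\tr^{*_{J}}$, i.e. exactly when $\rho^{J}$ is $x$-constant, equivalently $\widetilde{\rho}^{J}=0$. Reading this modulo $\nu^{n+1}$, the statement \emph{$*_{J}$ is closed up to order $n$} becomes \emph{$\widetilde{\rho}^{J}$ is constant modulo that order}. Hence the corollary is equivalent to the claim that the $L^2$-pairing above detects the vanishing of $\widetilde{\rho}^{J}$ order by order in $\nu$.

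To close the argument I would use that $\mathrm{id}-\frac{\nu}{2}\Delta^{J}$ is a formal deformation of the identity and that $\Delta^{J}$ preserves $C^{\infty}_0(M)$ (a Laplacian integrates to zero); therefore it is invertible on $C^{\infty}_0(M)[[\nu]]$ and maps this space onto itself. Consequently, as $H$ runs over $C^{\infty}_0(M)[[\nu]]$ the function $(\mathrm{id}-\frac{\nu}{2}\Delta^{J})H$ runs over \emph{all} zero-average formal functions, so the vanishing of $\widetilde{\mu}(\cdot)(J)$ to a given order is equivalent to the vanishing, to that order, of $\int_M G\,\widetilde{\rho}^{J}\dvol$ for every zero-average $G$. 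Since $\widetilde{\rho}^{J}$ is itself zero-average, the nondegeneracy of the $L^2$-pairing on zero-average functions — applied coefficient by coefficient in $\nu$, testing against $G=\widetilde{\rho}^{J}_k$ at each step — forces the successive coefficients of $\widetilde{\rho}^{J}$ to vanish, and conversely their vanishing makes the pairing vanish. The forward implication (closed $\Rightarrow$ moment map vanishes) is then immediate from the pairing formula; the substance is the converse. Matching the powers of $\nu$ (the prefactor $\frac{4}{\nu}$ together with the convention fixing when $\widetilde{\rho}^{J}$ first fails to be constant) then yields the stated equivalence between $\widetilde{\mu}(J)=0+O(\nu^{n+1})$ and closedness up to order $n$; at order $\nu^0$ this is precisely the first-order link of \cite{LLF}, visible already through $4(2\pi)^m\nu^{m-1}\tr^{*_{J}}(H)=-\int_M HS^{J}\dvol+O(\nu)$, which identifies the leading part of $\widetilde{\mu}(J)$ with the non-constant part of $S^{J}$.

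I expect the main obstacle to be this converse step: one must verify that testing $\widetilde{\rho}^{J}$ only against the deformed family $(\mathrm{id}-\frac{\nu}{2}\Delta^{J})H$ is as strong as testing against all zero-average functions, and that the induction on the order of $\nu$ is organized so that the index bookkeeping matches the paper's convention for ``closed up to order $n$''. Everything else is a direct substitution of the trace-density formula, so the analytic heart of the proof is the invertibility of $\mathrm{id}-\frac{\nu}{2}\Delta^{J}$ on $C^{\infty}_0(M)[[\nu]]$ combined with the $L^2$-nondegeneracy on zero-average functions.
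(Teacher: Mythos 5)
The paper states this corollary without any proof, so there is nothing internal to compare against; your route via the trace density is certainly the intended one (it is the mechanism of \cite{LLF} that the author is invoking). The individual ingredients you use are all sound: the formula $\widetilde{\mu}(H)(J)=\frac{4}{\nu}\int_M(H-\frac{\nu}{2}\Delta^J H)\rho^J\dvol$, the observation that $\Delta^J$ preserves zero-average functions so that $\mathrm{id}-\frac{\nu}{2}\Delta^J$ is a $\nu$-adically invertible endomorphism of $C^\infty_0(M)[[\nu]]$, and the $L^2$-nondegeneracy argument showing that $\widetilde{\mu}(\cdot)(J)$ vanishes to a given order precisely when the non-constant part $\widetilde{\rho}^J$ of the trace density vanishes to the corresponding order. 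That reduction is complete and correct.

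The genuine gap is the step you defer as ``index bookkeeping'': it is not mere bookkeeping, and under the literal definitions it does not close up. On the moment-map side, if $\widetilde{\rho}^J=\nu^k\widetilde{\rho}_k+O(\nu^{k+1})$ with $\widetilde{\rho}_k\neq 0$, your pairing argument gives $\widetilde{\mu}(H)(J)=4\nu^{k-1}\int_M H\widetilde{\rho}_k\dvol+O(\nu^{k})$, so $\widetilde{\mu}(J)=O(\nu^{n+1})$ iff $k\geq n+2$. On the closedness side, the equivalence ``integral is a trace mod $\nu^{n+1}$ iff $\widetilde{\rho}^J$ is constant mod (some order)'' is exactly the statement you must prove, and it requires analyzing $\int_M[F,G]_{*_J}\dvol=-\overline{\rho^J}^{-1}\int_M[F,G]_{*_J}\widetilde{\rho}^J\dvol$: since $[F,G]_{*_J}=\nu\{F,G\}+O(\nu^2)$, the first obstruction appears at order $\nu^{k+1}$ (using the nondegeneracy of $(F,G)\mapsto\int_M\{F,G\}u\,\dvol$ in $u$ modulo constants), so the integral is a trace mod $\nu^{n+1}$ iff $k\geq n$. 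The two sides of the asserted equivalence thus land at $k\geq n$ versus $k\geq n+2$, a shift of two orders of $\nu$ coming from the $\frac{4}{\nu}$ prefactor in $\widetilde{\mu}$ and from the extra power of $\nu$ gained when pairing a star-commutator against the density. To finish you must either identify a different convention for ``closed up to order $n$'' that absorbs this shift, or conclude that the corollary's indexing needs adjustment; as written, your proof establishes an equivalence of the two conditions only up to this unresolved re-indexing, which is precisely the content the corollary is claiming.
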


\begin{rem}
In the almost-K\"ahler case, one can state the same corollary involving $\mu$, but the (formal) moment map interpretation is only valid at order $1$ in $\nu$.
\end{rem}

\section*{Appendix}

In this appendix, we prove the key identities from Lemmas \ref{lemme:delta}, \ref{lemme:exactLaplacian}, \ref{lemme:ddtoLaplacian} and \ref{lemme:formula}

\begin{proof}[Proof of Lemma \ref{lemme:delta}]
We prove that 
$$\ddto\left[\delta^{J_{0t}}(B)\right]^{b_{J_{0t}}}- \ddto\left[\delta^{\widetilde{J}_{0t}}(A)\right]^{b_{\widetilde{J}_{0t}}}=-\frac{1}{2} d(\mathrm{Tr}(JAB)),$$
with the notations introduced in Theorem \ref{theor:Ralpha}.

To identify the above LHS, we compute for all $Y\in \VM$, the $L^2$-product of $\ddto\left[\delta^{\widetilde{J}_{0t}}(A)\right]^{b_{\widetilde{J}_{0t}}}$ with the $1$-form $g_J(Y,\cdot)$. With a frame $\{e_k\,|\,k=1,\ldots,2m\}$, we obtain
\begin{eqnarray*}
\int_M \ddto\left[\delta^{\widetilde{J}_{0t}}(A)\right]^{b_{\widetilde{J}_{0t}}}(e_k)g_J^{kl}g_J(Y,e_l)\dvol & \hspace{-0.2cm} = & \hspace{-0.2cm} \ddto \int_M \left[\delta^{\widetilde{J}_{0t}}(A)\right]^{b_{\widetilde{J}_{0t}}}(e_k)(g_{\widetilde{J}_{0t}})^{kl}g_{\widetilde{J}_{0t}}(Y,e_l) \\
 & \hspace{-0.2cm} = & \hspace{-0.2cm} \ddto  \int_M g_{\widetilde{J}_{0t}}(e_k, Ae_q))(g_{\widetilde{J}_{0t}})^{qp}(g_{\widetilde{J}_{0t}})^{kl}g_{\widetilde{J}_{0t}}(\nabla^{g_{\widetilde{J}_{0t}}}_{e_p} Y,e_l) \\
\end{eqnarray*}
Making use of Lemma \ref{lemme:firstvarnabla}, one obtains
\begin{eqnarray*}
\int_M \ddto\left[\delta^{\widetilde{J}_{0t}}(A)\right]^{b_{\widetilde{J}_{0t}}}(e_k)g^{kl}g(Y,e_l)\dvol & \hspace{-0.2cm} = & \hspace{-0.2cm} - \int_M g^{kl} g(\nabla^{g_J}_{e_k}Y,J(AB+BA)e_l)\dvol \\
 & & \hspace{-0.2cm} -\, \frac{1}{2}\int_M g^{qp} g^{kl} g(e_k,Ae_q)\left[\nabla^{g_J}_Yg(\cdot,JB\cdot)\right](e_k,e_l)\dvol.\nonumber
\end{eqnarray*}
Because the first term in the RHS above is symmetric in $A,B$, denoting by $(\cdot,\cdot)_J$ the $L^2$ product of tensors induced by $g_J$, we have for all $Y\in \VM$
\begin{eqnarray}
\left(\ddto\left[\delta^{J_{0t}}(B)\right]^{b_{J_{0t}}}- \ddto\left[\delta^{\widetilde{J}_{0t}}(A)\right]^{b_{\widetilde{J}_{0t}}},g(Y,\cdot) \right)_J &  = &  -\frac{1}{2} \left( g(\cdot,B\cdot),\left[\nabla^{g_J}_Y g(\cdot,JA\cdot)\right]\right)_J \nonumber\\
& & +\,\frac{1}{2}\left( g(\cdot,A\cdot),\left[\nabla^{g_J}_Yg(\cdot,JB\cdot)\right]\right)_J,\nonumber\\
 &  = & -\,\frac{1}{2}\left(d(\mathrm{Tr}(JAB)),g(Y,\cdot)\right)_J,\nonumber
\end{eqnarray}
which concludes the proof of the Lemma \ref{lemme:delta}.
\end{proof}

\begin{proof}[Proof of Lemma \ref{lemme:exactLaplacian}]
Considering $(M,\omega,J)$ is K\"ahler, we will show
$$\imath(X_H)\rho^J + \frac{1}{2}\left(\delta^J\Lr_{X_H}J\right)^{b_J}=d(\frac{1}{2}\Delta^J H).$$

\noindent In the K\"ahler case,
\begin{equation}\label{eq:LXHJ}
\Lr_{X_H}J(Y)=-\nablaLC_{JY}X_H+J\nablaLC_Y X_H.
\end{equation}
So that, denoting by $\beta=g(X_H,\cdot)$ and using an unitary frame $\{e_k\,|\,k=1,\ldots,2m\}$,
\begin{equation}\label{eq:deltaLXHJ}
\delta^J\left(\Lr_{X_H}J\right)^b = \sum_i (\nablaLC)^2_{(e_i,J e_i)} \beta - \delta^J\nablaLC (\beta \circ J).
\end{equation}
Now, since $\rho^J$ co\"incides with the Ricci form when $(M,\omega,J)$ is K\"ahler, Equation \eqref{eq:RicciKahler} leads to
\begin{eqnarray*}
\sum_i (\nablaLC)^2_{(e_i,J e_i)} \beta & = & \frac{1}{2} \sum_i R^{g_J}(e_i,Je_i)\beta \\
& = & -\imath(X_H)\rho^J %a checker --------> OK!
\end{eqnarray*}
Using the Weitzenbock formula and $\beta\circ J = -dH$, the second term of Equation \eqref{eq:deltaLXHJ} becomes
\begin{eqnarray*}
 - \delta^J\nablaLC (\beta \circ J)& = & (\delta^J d + d\delta^J)dH + \sum_{i,j} e_i^*\wedge \imath(e_j) R(e_i,e_j)dH,\\
 & = & d(\Delta^J H) -\imath(X_H)\rho^J.
\end{eqnarray*}
So,
\begin{equation*}
\delta^J\left(\Lr_{X_H}J\right)^b =  -2\imath(X_H)\rho^J+ d(\Delta^J H).
\end{equation*}
which finishes the proof.
\end{proof}

\begin{proof}[Proof of Lemma \ref{lemme:ddtoLaplacian}]
In the K\"ahler setting, we will prove 
$$\ddto \Delta^{J_t} H = (\delta^J A)^{b_J}(X_H) -  \mathrm{Tr}(JA\Lr_{X_H}J).$$

\noindent The LHS is
$$\ddto \Delta^{J_t} H = \frac{1}{2}\ddto \Lambda^{ks}\left[d(-dH\circ J_t)\right]_{ks}$$
So that in a frame $\{e_k\, |\,k=1,\ldots,2m\}$, we have
\begin{eqnarray*}
\ddto \Delta^{J_t} H & = & \frac{1}{2}\ddto \Lambda^{ks}\left[d(-dH\circ A)\right]_{ks} \\
 & = &-\Lambda^{ks}\left( e_k(\omega(X_H,Ae_s))-\omega(X_H,A\nablaLC_{e_k} e_s)\right) \\
 & = & -\Lambda^{ks}\left( \omega(\nablaLC_{e_k}X_H,Ae_s))-\omega(X_H,(\nablaLC_{e_k}A) e_s)\right)
\end{eqnarray*}
By Equation \eqref{eq:LXHJ}, we get
$$\ddto \Delta^{J_t} H =  -\, \frac{1}{2} \mathrm{Tr}(JA\Lr_{X_H}J)+ (\delta^J A)^{b_J}(X_H).$$
which concludes the proof.
\end{proof}

\begin{proof}[Proof of Lemma \ref{lemme:formula}]
Let us prove finally that 
$$\int_M (\delta^J A)^{b_J}(X_H) \dvol = \int_M \mathrm{Tr}(JA\Lr_{X_H}J)\dvol$$
Using the notation $(\cdot,\cdot)_J$ for the $L^2$-product of tensors, then :
\begin{eqnarray*}
\int_M \mathrm{Tr}(JA\Lr_{X_H}J)\dvol & = &  (g_J(JA\cdot,\cdot),g_J(\Lr_{X_H}J\cdot,\cdot))_J  
\end{eqnarray*}
From $\Lr_{X_H}\omega=0$, we obtain
$$g_J((\Lr_{X_H}J)U,V)=-g_J(\nablaLC_{JU}X_H,V) - g_J(JU, \nablaLC_V X_H).$$
So that,
$$(g_J(JA\cdot,\cdot),g_J(\Lr_{X_H}J\cdot,\cdot))_J= \int_M (\delta^J A)^{b_J}(X_H) \dvol.$$
The proof is over.
\end{proof}

\end{document}